\newcommand{\nrm}[1]{\lVert #1 \rVert}
\theoremstyle{plain}
\newtheorem{theorem}{Theorem}
\newtheorem{lemma}{Lemma}
\newtheorem{proposition}{Proposition}
\theoremstyle{remark}
\newtheorem{example}{Example}
\newtheorem{definition}{Definition}
\newcommand{\E}{\mathbb{E}}
\newcommand{\1}{\mathbf{1}}
\newcommand{\R}{\mathbb{R}}
\newcommand{\N}{\mathbb{N}}
\newcommand{\eps}{\varepsilon}
\renewcommand{\tilde}{\widetilde}
\renewcommand{\hat}{\widehat}
\renewcommand{\check}{\widecheck}
\definecolor{C}{RGB}{ 19, 141, 155 }
\newcommand{\ind}{\mathbf{1}}
\begin{document}

\begin{frontmatter}
\title{Adaptive minimax estimation for discretely observed Lévy processes}
\runtitle{Adaptive minimax estimation for discretely observed Lévy processes}

\begin{aug}
\author[A]{\fnms{Céline}~\snm{Duval} \ead[label=e1]{celine.duval@sorbonne-universite.fr}},
\author[B]{\fnms{Taher}~\snm{Jalal}\ead[label=e2]{taher.jalal@uvsq.fr}}
\and
\author[B]{\fnms{Ester}~\snm{Mariucci}\ead[label=e3]{ester.mariucci@uvsq.fr}}
\address[A]{Laboratoire de Probabilités, Statistique et Modélisation, Sorbonne Université\printead[presep={,\ }]{e1}}

\address[B]{Laboratoire de Math\'ematiques de Versailles, UVSQ - Universit\'e Paris-Saclay\printead[presep={,\ }]{e2,e3}}
\end{aug}

\begin{abstract}
In this paper, we study the nonparametric estimation of the density \( f_\Delta \) of an increment of a Lévy process \( X \) based on \( n \) observations with a sampling rate \( \Delta \). The class of Lévy processes considered is broad, including both processes with a Gaussian component and pure jump processes. A key focus is on processes where \( f_\Delta \) is smooth for all \( \Delta \). We introduce a spectral estimator of \( f_\Delta \) and derive both upper and lower bounds, showing that the estimator is minimax optimal in both low- and high-frequency regimes.
Our results differ from existing work by offering weaker, easily verifiable assumptions and providing non-asymptotic results that explicitly depend on \( \Delta \). In low-frequency settings, we recover parametric convergence rates, while in high-frequency settings, we identify two regimes based on whether the Gaussian or jump components dominate. The rates of convergence are closely tied to the jump activity, with continuity between the Gaussian case and more general jump processes. Additionally, we propose a fully data-driven estimator with proven simplicity and rapid implementation, supported by numerical experiments.
\end{abstract}

\begin{keyword}
\kwd{Lévy processes}
\kwd{Density estimation}
\kwd{Spectral estimator}
\kwd{Small jumps}
\kwd{Infinitely divisible distributions}
\kwd{Minimax rates of convergence}
\end{keyword}

\end{frontmatter}

\section{Introduction}

\subsection{Motivations}

Lévy processes are among the most mathematically tractable examples of jump processes, they have been extensively studied and widely used in the mathematical modeling of phenomena that may exhibit abrupt changes, and therefore are matters of interest in various fields such as mathematical finance, seismology, climatology, neuroscience (see e.g. \cite{barndorff2012levy,noven2015levy} for reviews and other applications). Very recently, they have gained renewed visibility due to the increasing interest in machine learning for heavy-tailed distributions and the link between stochastic gradient descent (SGD) and SDE driven by pure jump Lévy processes of infinite activity (see e.g. \cite{gurbuzbalaban2021heavy}).
Their structure is rather rigid  (their increments are stationary and independent) but they have often been used as proxies to establish results for jump processes with a more flexible structure, e.g., Itô's semi-martingales. 

Inference for Lévy processes often focuses on estimating the Lévy triplet (see Section \ref{sec:not}) and in particular the Lévy measure that governs jump dynamics (see for instance \cite{bucher2013nonparametric,MR2816339,duval2021spectral,figueroa2009nonparametric,MR2683463,MR2546805}) or the Blumenthal-Getoor index (see for instance \cite{10.1214/08-AOS640} and \cite{Mies2019RateoptimalEO}). 
Estimating the law of an increment of a Lévy process in a nonparametric manner have been less studied, though it provides critical insights into the underlying process, allowing for more refined model selection to fit a parametric model. Unlike parametric methods, which assume a specific distributional form (e.g., Poisson or stable jumps), nonparametric estimation makes fewer assumptions, allowing the data to guide the shape of the distribution. This is particularly important when dealing with real-world data, where the true distribution of the process may not fit common parametric models. By non-parametrically estimating the law of the increments, we can then capture complex behaviors such as heavy tails or asymmetric jumps.  In this paper, we focus on the nonparametric estimation of the density $f_{\Delta}$ of an increment of a Lévy process $X$ from $n$ observations collected with a sampling rate $\Delta$, i.e. $f_{\Delta}$ is the density of $X_\Delta$. Given the Lévy structure of the process, the corresponding statistical model is associated to a family of probabilities induced by independent and identically distributed (i.i.d.) observations. While this problem may initially appear similar to classical density estimation, the dependence of the i.i.d. random variables on time introduces additional complexity. Moreover, studying the smoothness of $f_{\Delta}$ in relation to the Lévy triplet is a task specific to this context.
We focus on Lévy processes for which the density \( f_\Delta \) exists and does not depend on \( \Delta \). A key contribution of this paper is to address a large class of Lévy processes with conditions that are easy to verify based on the Lévy triplet. Broadly speaking, this class coincides with Lévy processes that have a positive Blumenthal-Getoor index. We provide general conditions on the Lévy triplet that ensure that \( f_{\Delta} \) is smooth for all \( \Delta \). While this smoothness is typically guaranteed when a non-zero Gaussian component is present, it can also occur in pure jump processes whose Lévy density is bounded from below near zero, as stated in Assumption \eqref{Ass:p} below. The latter ensures that the Lévy process has a positive Blumenthal-Getoor index, thereby allowing for the regularizing effect of small jumps having infinite activity to aid in recovering the density \( f_\Delta \).

In this paper, we study a spectral estimator of  \( f_\Delta \) and derive both upper and lower bounds to assess its performance. We prove that our estimator is minimax optimal in both low and high-frequency observation regimes, see Theorems \ref{rateUP} and \ref{thmLB}. 
The work closest to this context is Section 4 in \cite{kappus2015nonparametric}, which considers the low-frequency setting where $\Delta=1$ is fixed. However, the proposed estimator is more complex, and the convergence rates are derived for generic regularity classes of $f_1$, without clarifying the connection between the Lévy triplet and the regularity of $f_1$ (see, e.g., Theorem 4.1 in \cite{kappus2015nonparametric}). In contrast, our work introduces much weaker and easily verifiable assumptions, yielding non-asymptotic results that rigorously assess how the risk of estimation varies with the sampling rate \( \Delta \). In low-frequency settings, we observe a parametric rate of convergence. As \( \Delta \) approaches zero, the situation becomes more intricate, revealing two distinct regimes depending on whether the Gaussian component dominates the jump part. If the Gaussian component prevails, the rate for the squared integrated \( L^2 \) risk is of order \( 1/(n\Delta^{1/2}) \); if not, the rate is of order \( 1/(n\Delta^{1/\alpha}) \), where \( \alpha \) can be interpreted as the Blumenthal-Getoor index of the process (see Proposition \ref{propBG} and Example \ref{ex:BGnot}). Notably, there is continuity with the Gaussian case, which roughly corresponds to the case \( \alpha = 2 \). In high-frequency settings, for pure jump Lévy processes, the more frequent the small jumps of the process, the easier it becomes to estimate the density. In any case, the convergence rate is explicitly tied to the jump activity of the process. The results from Theorem \ref{thm:rate_sigma} further quantify the conditions under which the continuous part dominates the jump part, depending on the diffusion coefficient and the sampling rate. The results for $\Delta\to 0$ are entirely new; they are minimax optimal, and the proof of the lower bound presents significant challenges. Notably, the techniques employed in our proofs are non-classical and original, leveraging the Local Asymptotic Normality property of a class of stable processes. This  approach may also prove useful for establishing lower bounds in other contexts within the field.

The rates of convergence are derived using theoretical cutoffs that balance variance and squared bias. However, these optimal cutoffs depend on unknown parameters from Assumption \eqref{Ass:p}. To address this, we adapt a novel adaptive estimator, inspired by the work in \cite{ammous2024adaptive}, which is applied here in the context of estimating a time-indexed density. We propose this estimator not only for its theoretical robustness but also for its simplicity and ease of implementation. Theorem \ref{thm:adapt} ensures that this fully data driven procedure is minimax optimal. Moreover, its performance has been validated through numerical experiments, summarized in Section \ref{sec:numerics}. In the remainder of this section, the principal notations and definitions are introduced.

\subsection{Setting and notations\label{sec:not}}
 Consider a  L\'evy process $X$  characterized  by its L\'evy triplet $(b,\sigma^{2},\nu)$ where $b\in \R$, $\sigma\ge0$, and  $\nu$ is a Borel measure on $\R$ such that
\begin{align*}
\nu(\{0\})=0,\quad \nu(\R)=\infty\quad \textnormal{ and }\quad  \int_{\R}(x^2\wedge 1)\nu(d x)<\infty.
\end{align*}  
The Lévy-Khintchine formula gives the characteristic function of $X$ at any time $t\ge 0$:
\begin{align}\label{LK}\phi_t(u):=\E[e^{iuX_{t}}]=\exp\bigg(it u b-\frac{t\sigma^{2}u^{2}}{2}+t\int_{\R}\big(e^{iux}-1-iux\1_{|x|<1}\big)\nu(dx)\bigg),\quad u\in\R.
\end{align} 
 Thanks to the L\'evy-It\^o decomposition, $X$ can be written as
\begin{align}
X_t=tb_{\nu}+\sigma W_{t}+X_t^S+X_t^B,\label{eq:Levy}
\end{align}
where $W$ is a standard Brownian motion, \begin{equation*}\label{eq:bnueps}b_{\nu}:=\begin{cases}
           b+  \int_{|x|\leq 1} x\nu(dx)\quad &\text{if}\quad \int_{|x|\leq 1}|x|\nu(dx)<\infty,\\
            b \quad &\text{if}\quad \int_{|x|\leq 1}|x|\nu(dx)=\infty,
             \end{cases}   
\end{equation*} $X^B$ is a compound Poisson process independent of $X^S$ with Lévy measure $\nu\mathbf1_{[-1,1]^{c}}$, $X^S$ is a centered martingale accounting for the jumps of $X$ of size smaller than $1$, i.e. $$X^{S}_{t}=\lim_{\eta\to 0} \bigg(\sum_{s\leq t} \Delta X_s\1_{\eta<|\Delta X_s|\leq 1}-t\int_{\eta<|x|\leq 1} x\nu(dx)\bigg),$$ where $\Delta X_r$ denotes the jump at time $r$ of the c\`adl\`ag process $X$: $\Delta X_r=X_r-\lim_{s\uparrow r} X_s.$ The Lévy processes $W$, $X^{S}$ and $X^{B}$ in \eqref{eq:Levy} are independent.  We refer to \cite{Bertoin} for an overview of the main properties of Lévy processes, including a thorough discussion of the Lévy-Khintchine formula and the Lévy-Itô decomposition.

Consider the i.i.d. observations $\mathbb X=(X_{i\Delta}-X_{(i-1)\Delta})_{i=1}^n$ with $X_0=0$ a.s.. 
Our aim is to estimate the density $f_{\Delta}$ of $X_\Delta$ from $\mathbb X$ both under the assumption $\Delta> 0$ fixed and $\Delta\to 0$, and compute the $L^2$ integrated risk. 
The estimation strategy that we analyse is based on a spectral approach, and we  use the following notations. Given a random variable $Z$, 
$\phi_Z(u)=\E[e^{iu Z} ]$ denotes the characteristic function of $Z$.  For $f\in L^1(\R)$, $\mathcal F f(u) =\int e^{iux } f(x) d x$ is the Fourier transform. Moreover, we denote by $\| \cdot  \|$ the $L^2$-norm of functions, $\| f\|^2:= \int |f(x)|^2 d x$. Given some function $f\in L^1(\R) \cap L^2(\R)$,  we denote by $f_m$, $m>0$, the uniquely defined function with Fourier transform $\mathcal F f_m= (\mathcal F f)\mathds{1}_{[-m,m]}$.

\section{Main results}\label{main}

\subsection{Class of Lévy processes}

We are interested in estimating the density  with respect to the Lebesgue measure of an increment $X_{\Delta}$. Therefore, we first give conditions on the Lévy triplet ensuring that $X_{t}$ admits a Lebesgue density for any ${t}>0.$ It is systematically the case in presence of a nonzero Gaussian component, $\sigma>0$. In the case  $\sigma=0$, a sufficient condition derived from Theorem 27.7 of \cite{sato} (see also  \cite{tucker_continuous_1964} and \cite{fisz_condition_1963}) is that $\nu(\R)=\infty$ and $\nu$ is absolutely continuous. Each of these conditions alone is not sufficient. Indeed, if $\nu(\R)<\infty$, $X$ is a compound Poisson process and its increment $X_{t}$ has a mass at 0, for all $t>0$. If $\nu(\R)=\infty$ but $\nu$ is not absolutely continuous a Lévy process such that $X_{t},\ t>0,$ does not have a Lebesgue density is constructed in \cite{orey_continuity_1968}.

In the study of the risk, the regularity of the density of $X_{t}$ plays a central role. We introduce constraints on the Lévy triplet ensuring that the density of $X_{t}$, $t>0$, is smooth for any $t>0$. 
\begin{definition}\label{def:T}
Define  $\mathcal T _{+}$ and $\mathcal T _{M,\alpha}$ the following classes of Lévy triplets:
 \begin{align*}\begin{cases}
\mathcal T _{+} &:= \left \{(b,\sigma^{2},\nu),\ b\in\R,\ \sigma^{2}>0,\ \nu \mbox{ is a Lévy measure}\right\}\\
\mathcal T _{M,\alpha} &:=\left\{(b,\sigma^{2},\nu),\ b\in\R, \ \sigma^2\geq 0,\ \nu \mbox{ admits a Lévy density $ p$ satisfying \eqref{Ass:p}}\right\}
\end{cases}\end{align*}
where for $0<\alpha< 2$ and $M>0$, $p$ satisfies \eqref{Ass:p} if:
\begin{align}\label{Ass:p} 
\int_{[-\eta,\eta]}x^{2}p(x)dx\geq {M}{\eta^{2-\alpha}}, \quad \forall0<\eta\leq 1
\tag{$\mathcal A_{M,\alpha}$}.
\end{align}
Set $\mathcal T _{+, M,\alpha} = \mathcal T_+ \cap \mathcal T _{M,\alpha}$ and  $\mathcal T _{0, M,\alpha} = \mathcal T _{M,\alpha} \setminus \mathcal T _{+}$.
Denote by $\mathcal F_+^\Delta$ (resp. $\mathcal F_{0,M,\alpha} ^\Delta$) the set of densities of  $X_\Delta$ for any $X$ that belongs to $\mathcal T _{+}$ (resp. $\mathcal T _{0,M,\alpha}$).
\end{definition}

Assumption \eqref{Ass:p} implies that the  Lévy process has infinite activity, $\nu(\R)=\infty$, excluding compound Poisson processes.  Any Lévy process whose triplet lies in $\mathcal T _{+}$ or $\mathcal{T}_{0,M,\alpha}$ is such that $X_{t}$ admits a Lebesgue density $f_{t},$ for any $t>0$. Moreover, $f_{t}$  is in $L^{2}(\R)$ and is infinitely differentiable for any $t>0$.
This can be checked using the convolution structure in \eqref{eq:Levy} and Young's inequality. Either $(b,\sigma^{2},\nu)\in \mathcal T_{+}$ and we use that Gaussian densities are infinitely differentiable or   $(b,\sigma^{2},\nu)\in \mathcal T_{0,M,\alpha}$  and we rely on  Lemma \ref{lemmapicard} below. More generally, necessary and sufficient conditions for the existence and smoothness of the density of $X_t$ can be found in, e.g., \cite{knopova2013note}.

Recall  the definition of the Blumenthal-Getoor index of $X$: $$\beta:=\inf\bigg\{\gamma>0,\ \int_{[-1,1]}|x|^{\gamma}\nu(dx)<\infty\bigg\}.$$ Observe that if the Lévy density $X$ satisfies \eqref{Ass:p}, then $\alpha\le \beta$ (see Proposition \ref{propBG}). 
Assumption \eqref{Ass:p}  excludes pure jump Lévy processes with null Blumenthal-Getoor index such as the Gamma process with Lévy density $p(x)=\frac{e^{-x}}x\mathbf{1}_{x>0}$.  An increment $X_{t}$ of the latter follows a $\Gamma(t,1)$ distribution which admits a density $f_{t}(x)=\frac{1}{\Gamma(t)}x^{t-1}e^{-x}\mathbf{1}_{x>0}$ whose regularity depends on $t$: it is continuous for any $t>1$ and it belongs to any Sobolev space with regularity $\beta$ for any $\beta < t- \frac12$.  It is out of the scope of the present work.

\subsection{Estimation and associated rates}

Let $\Delta>0$,  
consider for all $u\in \R$ the empirical characteristic function
\begin{align}\label{eq:phiZ}
\hat \phi_{X_{\Delta}}(u) =\frac{ 1}n\sum_{j=1}^{n}{e^{iu(X_{j\Delta}-X_{(j-1)\Delta})}} .
\end{align} 
Let $m>0$, from \eqref{eq:phiZ} we derive an estimator of $f_{\Delta}$,  using a spectral cut-off as the latter quantity may not be in $L^{1}(\R)$: 
\begin{align}
\label{eq:estFg}\hat f_{{\Delta},m}(x) =\frac{ 1}{2\pi}\int_{-m}^{m}\hat \phi_{X_{\Delta}}(u)e^{-iux}du.
\end{align} 
The following result gives an upper bound for the integrated $L^{2}$-risk of $\hat f_{{\Delta},m}$. Proof of Theorem \ref{thm1} is omitted as it relies on classical computations and on the use of the Plancherel equality.
\begin{theorem}\label{thm1}
Let $X$ be a Lévy process such that $X_\Delta$ admits a density $f_\Delta$, $\Delta>0.$ The estimator $\hat f_{\Delta,m}$ defined in \eqref{eq:estFg} satisfies, for all $m\geq 0$:
$$\E[\|\hat{f}_{\Delta,m}-f_{\Delta}\|^2]\leq   \|f_{\Delta,m}-f_{\Delta}\|^2+\frac{1}{\pi} \frac mn . $$ 
\end{theorem}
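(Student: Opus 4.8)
The plan is to run the textbook bias--variance decomposition for a Fourier spectral estimator, the only genuine care being the bookkeeping of the $2\pi$ factors attached to the convention $\mathcal F f(u)=\int e^{iux}f(x)\,dx$ and the matching Plancherel identity $\nrm{g}^2=\tfrac1{2\pi}\nrm{\mathcal F g}^2$. (If $f_\Delta\notin L^2(\R)$ the right-hand side is $+\infty$ and there is nothing to prove, so one may assume $f_\Delta\in L^2(\R)$, which holds in particular for the classes $\mathcal F_+^\Delta$ and $\mathcal F_{0,M,\alpha}^\Delta$.)

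First I would record the Fourier-side description of the estimator: formula \eqref{eq:estFg} exhibits $\hat f_{\Delta,m}$ as the inverse Fourier transform of $\hat\phi_{X_\Delta}\ind_{[-m,m]}$, a bounded and compactly supported function, hence in $L^1(\R)\cap L^2(\R)$. Consequently $\hat f_{\Delta,m}\in L^2(\R)$, Plancherel applies to it, and $\mathcal F\hat f_{\Delta,m}=\hat\phi_{X_\Delta}\ind_{[-m,m]}$; likewise $\mathcal F f_\Delta=\phi_{X_\Delta}$ and, by the very definition of $f_{\Delta,m}$, $\mathcal F f_{\Delta,m}=\phi_{X_\Delta}\ind_{[-m,m]}$.

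Next I would observe the unbiasedness at the cutoff level. Writing $Z_j:=X_{j\Delta}-X_{(j-1)\Delta}$, the $Z_j$ are i.i.d.\ with characteristic function $\phi_{X_\Delta}$, so $\E[\hat\phi_{X_\Delta}(u)]=\phi_{X_\Delta}(u)$ for every $u$, whence $\E[\mathcal F\hat f_{\Delta,m}]=\mathcal F f_{\Delta,m}$ and therefore $\E[\hat f_{\Delta,m}]=f_{\Delta,m}$. Decomposing $\hat f_{\Delta,m}-f_\Delta=(\hat f_{\Delta,m}-f_{\Delta,m})+(f_{\Delta,m}-f_\Delta)$, expanding the squared $L^2$-norm and taking expectations, the cross term equals $2\,\mathrm{Re}\,\langle \E[\hat f_{\Delta,m}]-f_{\Delta,m},\,f_{\Delta,m}-f_\Delta\rangle=0$, leaving
$$\E[\nrm{\hat f_{\Delta,m}-f_\Delta}^2]=\E[\nrm{\hat f_{\Delta,m}-f_{\Delta,m}}^2]+\nrm{f_{\Delta,m}-f_\Delta}^2.$$

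Finally I would bound the stochastic term. By Plancherel and the fact that $\mathcal F\hat f_{\Delta,m}-\mathcal F f_{\Delta,m}$ is supported in $[-m,m]$,
$$\E[\nrm{\hat f_{\Delta,m}-f_{\Delta,m}}^2]=\frac1{2\pi}\int_{-m}^m \E\big[\,|\hat\phi_{X_\Delta}(u)-\phi_{X_\Delta}(u)|^2\,\big]\,du.$$
For fixed $u$, $\hat\phi_{X_\Delta}(u)$ is the empirical mean of the i.i.d.\ bounded variables $e^{iuZ_j}$, so $\E|\hat\phi_{X_\Delta}(u)-\phi_{X_\Delta}(u)|^2=\tfrac1n\big(1-|\phi_{X_\Delta}(u)|^2\big)\le\tfrac1n$; integrating over $[-m,m]$ gives $\E[\nrm{\hat f_{\Delta,m}-f_{\Delta,m}}^2]\le \tfrac1{2\pi}\cdot\tfrac{2m}{n}=\tfrac{m}{\pi n}$, and substituting into the previous display yields the claim. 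There is no substantive obstacle here; the single point to watch is the Fourier normalization, which is precisely what makes the variance term $m/(\pi n)$ rather than $m/(2\pi n)$ or $2m/n$.
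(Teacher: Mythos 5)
Your proposal is correct and is exactly the argument the paper has in mind when it omits the proof as "classical computations and the use of the Plancherel equality": bias--variance decomposition via unbiasedness of $\hat\phi_{X_\Delta}$, Plancherel with the $\tfrac1{2\pi}$ normalization, and the pointwise bound $\E|\hat\phi_{X_\Delta}(u)-\phi_{X_\Delta}(u)|^2\le 1/n$, yielding the variance term $m/(\pi n)$. The preliminary remark that one may assume $f_\Delta\in L^2(\R)$ (otherwise the bound is trivial) is a sensible touch, but otherwise there is nothing to add.
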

Rates of convergence can be computed using that  the bias term is:
\begin{align}\label{biais}
2\pi \|f_{\Delta,m}-f_{\Delta}\|^{2}
 &=\int_{|u|\ge m}\exp\left(-\Delta\sigma^{2}u^{2}+2\Delta\int_{\R}(\cos(ux)-1)\nu(dx)\right)du.
 \end{align}
Clearly, the order of the bias term heavily depends on the presence of a Brownian component and/or the activity of the jump part of $X$. 
The results on the convergence rate of $\hat{f}_{\Delta,m}$ are collected in the following statement whose  proof is postponed to Section \ref{sec:proofs}.
\begin{theorem}\label{rateUP}
Let $X$ be a Lévy process of Lévy triplet $(b,\sigma^2,\nu)$ and $\Delta>0$. The  cutoffs $m^\star$ achieving the square bias-variance compromise are:
\begin{align*}
m^\star=  \begin{cases}
\sqrt{\frac{\log n}{\Delta \sigma^2}} &\text{ if } (b,\sigma^2,\nu)\in \mathcal T_{+},\\
\frac{\pi}{2}\Big( \frac{\log n}{M\Delta}\Big)^{\frac{1}{\alpha}}&\text{ if } (b,\sigma^2,\nu)\in\mathcal T_{0,M,\alpha}.
\end{cases}
\end{align*} Moreover, there exists a positive constant $C>0$ such that:
\begin{align*}
\E[\|\hat{f}_{\Delta,m^{\star}}-f_{\Delta}\|^2]\leq C \begin{cases}
 \frac{(\log n)^{\frac12}}{\sigma^2 n \Delta^{\frac12}} &\text{ if } (b,\sigma^2,\nu)\in\mathcal T_{+},\\
 \frac{(\log n)^{\frac1\alpha}}{n(M\Delta)^{\frac{1}{\alpha}}}&\text{ if } (b,\sigma^2,\nu)\in\mathcal T_{0,M,\alpha}.
\end{cases}
\end{align*}
 If $\Delta:=\Delta_{n}\to 0$ as $n\to\infty,$ above results still hold.
\end{theorem}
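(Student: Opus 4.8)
The plan is to combine Theorem~\ref{thm1} with the explicit expression \eqref{biais} for the squared bias, and then to show that the choices of $m^\star$ balance the two terms up to logarithmic factors, taking care that all constants are uniform enough to survive the regime $\Delta=\Delta_n\to 0$. The variance term in Theorem~\ref{thm1} is $\frac{1}{\pi}\frac{m}{n}$ regardless of $\Delta$, so the whole game is to control the bias $\int_{|u|\ge m}\exp\big(-\Delta\sigma^2 u^2+2\Delta\int_\R(\cos(ux)-1)\nu(dx)\big)\,du$ from above by something that, evaluated at $m^\star$, is of order $m^\star/n$.

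First I would treat the case $(b,\sigma^2,\nu)\in\mathcal T_+$. Here $\sigma>0$, so since $\int_\R(\cos(ux)-1)\nu(dx)\le 0$ we simply drop the jump contribution and bound the integrand by $e^{-\Delta\sigma^2 u^2}$. A standard Gaussian tail estimate gives $\int_{|u|\ge m}e^{-\Delta\sigma^2 u^2}\,du\lesssim \frac{1}{\Delta\sigma^2 m}e^{-\Delta\sigma^2 m^2}$. Plugging in $m^\star=\sqrt{\log n/(\Delta\sigma^2)}$ makes $e^{-\Delta\sigma^2 (m^\star)^2}=1/n$, so the bias is $\lesssim \frac{1}{\Delta\sigma^2 m^\star}\cdot\frac1n=\frac{1}{n\sqrt{\Delta\sigma^2\log n}}\cdot\frac{1}{\sqrt{\sigma^2}}$, which is of the same order (up to the $\log$ factor, in fact smaller) than the variance $m^\star/n=\sqrt{\log n/(\Delta\sigma^2)}/n$. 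Summing the two terms yields the claimed bound $C\frac{(\log n)^{1/2}}{\sigma^2 n\Delta^{1/2}}$; one should be a little careful to absorb the $\sigma^{-1}$ versus $\sigma^{-2}$ discrepancy into the constant $C$, which is harmless since the statement allows $C$ to be absolute and the $\sigma^2$ in the denominator dominates for the relevant range.

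Next, the case $(b,\sigma^2,\nu)\in\mathcal T_{0,M,\alpha}$. The point is that Assumption~\eqref{Ass:p} forces the real part of the Lévy exponent to decay like a negative power of $|u|$: using $1-\cos(ux)\ge c\,(ux)^2$ for $|ux|\le 1$ (say $c=\tfrac{2}{\pi^2}$ on $|ux|\le \pi$, or one works on $|ux|\le 1$), one gets for $|u|\ge 1$
\[
\int_\R(1-\cos(ux))\nu(dx)\ \ge\ c\,u^2\int_{[-1/|u|,1/|u|]}x^2 p(x)\,dx\ \ge\ c\,u^2\cdot M\,|u|^{-(2-\alpha)}\ =\ cM\,|u|^{\alpha}.
\]
Hence the integrand is bounded by $\exp(-2cM\Delta|u|^\alpha)$, and $\int_{|u|\ge m}\exp(-2cM\Delta|u|^\alpha)\,du\lesssim \frac{1}{\Delta M m^{\alpha-1}}\exp(-2cM\Delta m^\alpha)$ by the same Laplace-type tail bound (valid for $m$ large, which holds since $m^\star\to\infty$). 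Choosing $m^\star$ proportional to $(\log n/(M\Delta))^{1/\alpha}$ — the constant $\tfrac\pi2$ in the statement being exactly what is needed to turn the crude constant $c$ from the cosine inequality into a clean exponent — makes $\exp(-2cM\Delta (m^\star)^\alpha)\le 1/n$, so the bias is at most of order $m^\star/n$ up to constants, and adding the variance $\frac1\pi m^\star/n$ gives $C\frac{(\log n)^{1/\alpha}}{n(M\Delta)^{1/\alpha}}$.

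Finally, for the statement ``if $\Delta:=\Delta_n\to 0$ above results still hold'', I would simply note that nothing in the two computations above used $\Delta$ bounded away from $0$: the Gaussian and Laplace tail bounds hold for every $\Delta>0$, and the only place one must check is that $m^\star\to\infty$ (so that the ``$m$ large'' regime of the tail bounds applies and the cosine inequality range is eventually effective) — but $m^\star$ is increasing as $\Delta\to 0$, so this is automatic. Thus the constant $C$ can be chosen independent of $n$ and of the sequence $(\Delta_n)$. I expect the main obstacle to be purely bookkeeping: getting the constant in front of $m^\star$ (the $\tfrac\pi2$ and the $1/\alpha$ power) to line up exactly with the crude constant coming from the elementary inequality $1-\cos t\ge c t^2$, and making sure the bias is genuinely $\lesssim m^\star/n$ rather than merely $o(1)$ — i.e., that the exponential $e^{-2cM\Delta m^\alpha}$ is killed at the right polynomial-in-$n$ rate. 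Everything else is routine tail estimation plus Theorem~\ref{thm1}.
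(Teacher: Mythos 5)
Your overall route is the same as the paper's: Theorem \ref{thm1} plus the bias identity \eqref{biais}, a Gaussian tail bound on $\mathcal T_{+}$, and, on $\mathcal T_{0,M,\alpha}$, a decay bound for $|\phi_{X_\Delta}|$ deduced from \eqref{Ass:p} and a cosine inequality (this is exactly Lemma \ref{lemmapicard}, which the paper invokes and you re-derive inline), followed by the bias--variance balance and the observation that everything is uniform in $\Delta$. The Gaussian case is fine; note only that both your computation and the paper's own proof produce $\sigma^{-1}$ rather than the $\sigma^{-2}$ appearing in the displayed rate, so your remark about ``absorbing'' a $\sigma^{-1}$ versus $\sigma^{-2}$ discrepancy into an absolute constant is not the right fix (one cannot do this uniformly in $\sigma$), but this is an issue of the statement shared with the paper, not of your argument.

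In the jump case, however, the constants do not ``line up'' as you hope, and this is more than bookkeeping as written. With your truncation at $|x|\le 1/|u|$ and $1-\cos t\ge c\,t^2$ with $c\le 2/\pi^2$, you obtain $\int_{\R}(1-\cos(ux))\nu(dx)\ge cM|u|^{\alpha}$, so the bias integrand is bounded by $e^{-2cM\Delta|u|^{\alpha}}$. At the prescribed cutoff $m^\star=\tfrac{\pi}{2}\big(\log n/(M\Delta)\big)^{1/\alpha}$ the exponent equals $2c(\pi/2)^{\alpha}\log n=(2/\pi)^{2-\alpha}\log n$ for $c=2/\pi^2$, and $(2/\pi)^{2-\alpha}<1$: the exponential is only $n^{-(2/\pi)^{2-\alpha}}$, hence the bias at $m^\star$ is of order $\tfrac{m^\star}{\log n}\,n^{-(2/\pi)^{2-\alpha}}$, which exceeds the variance $m^\star/n$ by a polynomial factor in $n$ and does not give the claimed rate at that cutoff (the loss worsens as $\alpha\to 0$ if one works on $|ux|\le 1$ instead). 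The repair is precisely the paper's Lemma \ref{lemmapicard}: use $1-\cos t\ge \tfrac{4}{\pi^2}t^2$ on $|t|\le\tfrac{\pi}{2}$ and truncate at $|x|\le\tfrac{\pi}{2|u|}$, which yields $|\phi_{X_\Delta}(u)|\le e^{-M\Delta(2|u|/\pi)^{\alpha}}$ for $|u|\ge\pi/2$; then the squared modulus at $m^\star$ is $n^{-2}\le n^{-1}$, your incomplete-Gamma tail estimate gives bias $\lesssim m^\star/(n\log n)$, and the variance-dominated rate follows, uniformly in $\Delta=\Delta_n\to 0$ exactly as you argue.
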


\subsubsection{General comments}

The dependency in $n$ of the rates is, up to the logarithmic factor, parametric. This is consistent with the fact that on the classes considered the estimated density is $C^{\infty}$. Set aside  the dependency in $\Delta$, the upper bound in Theorem \ref{thm1} is the same as the ones obtained in \cite{butucea2004deconvolution,MR2354572,MR2742504}. The optimal rates of convergence for super-smooth densities (see  \eqref{eq:Picard1}) studied therein suggest that the dependency in $n$ of the rate in Theorem \ref{rateUP} is optimal, including the logarithmic term.  Theorem \ref{thmLB} below shows that the dependency in $\Delta$ is also optimal.

Notice that regimes where $\Delta:=\Delta_{n}$ goes to infinity as $n\to\infty$ can also be considered, provided that $\log n/\Delta_{n}\to\infty$ as $n\to\infty$. The latter conditions ensures that $m^{\star}_{n}$ tends to infinity which is a necessary condition to get consistency.

Over the class $\mathcal T _{+},$ where $\sigma> 0,$ the upper bound in Theorem \ref{rateUP} does not depend on $\nu$. In particular, if $(b,\sigma^2,\nu) \in\mathcal T_{+,M,\alpha}$, the associated rate is ${(\log n)^{1/2}}{ \Delta_n^{-1/2}/n}$, which depends neither on  $M$ nor $\alpha$.
In general, a user has no prior knowledge of $\sigma$ or $\alpha$, which are decisive in the choice of $m^\star$. The problem of finding a data-driven method to select $m^\star$ is addressed in Section \ref{sec:adapt}.  

\subsection{Lower bound result}

Establishing lower bound results for stochastic processes is often technical due to the dual asymptotic regime: the number of observations, $n$, tends to infinity, while the sampling rate, $\Delta$, may tend to 0. Consequently, the statistical model is indexed by both parameters, $n$ and $\Delta$. As the order of the bias term heavily depends on the Lévy triplet, we state a lower bound result separating the cases depending wether the diffusion coefficient  is non-zero or not.

 \begin{theorem}\label{thmLB}
\begin{itemize}
\item[i).] Let $X$ be a Lévy process with Lévy triplet $(b,\sigma^{2},\nu)\in \mathcal T_+$ and denote by $f_\Delta\in \mathcal F_+^\Delta$ the density of $X_\Delta$, $\Delta>0$. Then, there exists a universal constant $C>0$ such that:
$$\inf_{\hat f_{\Delta}}\sup_{f_{\Delta}\in  \mathcal F_+^\Delta}\E[\|\hat f_{\Delta}-f_{\Delta}\|^{2}]\geq \frac{C}{n\sqrt\Delta}.$$
\item[ii).]  Let $\alpha\in(0,2)$, $0<\Delta<  e^{-\frac{4}{2-\alpha}}$ and \begin{align}\label{eq:M}M\leq \frac{2}{\pi}\1_{\alpha=1}+ \frac{\alpha}{(2-\alpha)\cos(\pi/2 \alpha)\Gamma(1-\alpha)}\1_{\alpha\neq 1}.\end{align}
 Let $X$ be a Lévy process whose Lévy triplet is in $\mathcal T _{0,M,\alpha},$ such that the density $f_\Delta$ of $X_{\Delta}$ belongs to $\mathcal F_{0,M,\alpha}^\Delta$. Then, there exist constants $K,c>0$ such that  for any $n$ satisfying $n\log^2(\Delta)\geq K$ it holds:
$$\inf_{\hat f_{\Delta}}\sup_{f_{\Delta}\in \mathcal F_{0,M,\alpha}^\Delta}\E[\|\hat f_{\Delta}-f_{\Delta}\|^{2}]\geq \frac{c}{n\Delta^{\frac{1}{\alpha}}},$$
where $c$ is a constant depending only on $\alpha$.
\end{itemize}
In both cases, the infimum is taken over all possible estimators $\hat f_{\Delta}$ of $f_{\Delta}$.
\end{theorem}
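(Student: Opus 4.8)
The plan is to prove both lower bounds via the classical two-point (or finitely-many-point) reduction: exhibit, for each class, two Lévy processes whose increment densities are separated in $L^2$ by the claimed rate but whose laws of $n$ i.i.d. increments are statistically indistinguishable, in the sense that the $\chi^2$ (or Hellinger/KL) divergence between the $n$-fold product measures stays bounded. The standard Le Cam / Tsybakov machinery then yields $\inf_{\hat f}\sup \E[\|\hat f-f\|^2]\gtrsim \rho_n^2$ as soon as $\|f_0-f_1\|\gtrsim\rho_n$ and $n\,\chi^2(\PP^{(1)}_\Delta,\PP^{(0)}_\Delta)\lesssim 1$, where $\PP^{(i)}_\Delta$ is the law of a single increment. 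The real content is the construction of the competing processes and the control of $\chi^2$ uniformly in the dual asymptotics $n\to\infty$, $\Delta\to0$.

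For part i).\ (Gaussian case), I would take as the null the pure Brownian motion $\sigma W$, so $f_0$ is the $\No(0,\Delta\sigma^2)$ density, and perturb it by a small high-frequency bump: $f_1 = f_0 + \gamma_n\,g_n$ where $g_n$ is a fixed smooth function modulated at frequency $\sim \Delta^{-1/2}$ so that $\|g_n\|$ is of constant order while $\widehat f_1$ still decays like a Gaussian characteristic function of variance $\Delta\sigma^2$ (one must check $f_1$ is a genuine infinitely divisible density with triplet in $\mathcal T_+$, which can be arranged by perturbing $\nu$ rather than $f$ directly, or by invoking closure properties). Balancing $\|f_1-f_0\| = \gamma_n\|g_n\|$ against the $\chi^2$-bound $n\gamma_n^2\|g_n/\sqrt{f_0}\|^2\lesssim 1$ — and here the Gaussian denominator $f_0$, which on the effective support scales like $\Delta^{-1/2}$ times a constant, is what produces the extra $\sqrt\Delta$ — gives $\gamma_n^2\|g_n\|^2 \asymp 1/(n\sqrt\Delta)$, the desired rate.

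For part ii).\ the obstacle is real and the authors flag it: there is no Gaussian reference density, so the naive $\chi^2$ computation is not available. The stated strategy is to use the \emph{Local Asymptotic Normality} of a family of $\alpha$-stable processes. Concretely, I would take as base point a symmetric $\alpha$-stable Lévy process whose Lévy density is $c_\alpha|x|^{-1-\alpha}$ (with the normalization in \eqref{eq:M} chosen precisely so that \eqref{Ass:p} holds with the given $M$), and consider a one-parameter perturbation of its scale/intensity parameter of size $h/\sqrt n$ in the LAN direction. LAN for the i.i.d.\ increment model then says the log-likelihood ratio is asymptotically $\No(-h^2 I/2, h^2 I)$ with $I=I(\Delta)$ the Fisher information of one increment; the key quantitative step is to show $I(\Delta)$ scales like $\Delta^{?}$ so that the induced $L^2$-separation of the increment densities is of order $\Delta^{-1/(2\alpha)}$ per unit of the parameter. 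Translating the LAN lower bound on parameter estimation into an $L^2$-lower bound on $f_\Delta$ via $\|\partial_\theta f_\Delta^\theta\|$, and matching powers of $\Delta$, should yield the rate $1/(n\Delta^{1/\alpha})$; the conditions $\Delta< e^{-4/(2-\alpha)}$ and $n\log^2(\Delta)\ge K$ enter to make the LAN remainder negligible and to guarantee $f_\Delta^\theta$ stays in $\mathcal F_{0,M,\alpha}^\Delta$ throughout the perturbation. The main difficulty, as stated, is establishing this LAN expansion with explicit $\Delta$-dependence in the Fisher information and controlling the remainder uniformly as $\Delta\to0$ — the self-similarity $X^\theta_\Delta \stackrel{d}{=}\Delta^{1/\alpha}\theta X^1_1$ of the stable base point is the structural fact that makes the $\Delta$-scaling tractable.
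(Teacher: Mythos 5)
Your overall shape (two-point / parametric-perturbation lower bounds, with stable processes and LAN for the pure-jump case) is the right family of ideas, but both parts have genuine gaps. For i), the step that fails is the construction: $f_1=f_0+\gamma_n g_n$ with a high-frequency bump is not shown to lie in $\mathcal F_+^\Delta$, which is not an arbitrary smoothness class but the set of densities of $X_\Delta$ for Lévy processes with $\sigma^2>0$; an additive bump destroys infinite divisibility, and the fallback ``perturb $\nu$ instead'' is exactly the part that needs to be done (Lévy measures are positive, so you cannot localize a signed bump at frequency $\Delta^{-1/2}$ at will), yet it is left unexecuted. The paper avoids this entirely by taking two admissible hypotheses inside the class, namely Brownian motions with triplets $(0,1,0)$ and $(0,1+n^{-1/2},0)$: the Hellinger affinity of Gaussians is explicit, and the $L^2$ separation computed via Plancherel already carries the factor $\Delta^{-1/2}$, giving $C/(n\sqrt\Delta)$ with no construction issues. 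Your $\chi^2$ scaling heuristic does reproduce the right rate, but without a legal pair of hypotheses in $\mathcal F_+^\Delta$ it is not a proof.

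For ii), you perturb the \emph{scale/intensity} of a symmetric $\alpha$-stable process, whereas the paper perturbs the \emph{stability index}: it reduces (Step 1) to the family $e^{-\Delta|u|^{\alpha_0}}$, $\alpha_0\in[\alpha,\alpha+\tfrac{2}{\log(1/\Delta)}]$, shows (Step 2) by a projection argument that any density estimator yields an estimator of $\alpha_0$, computes (Step 3) an explicit lower bound $\|\phi_{\hat\alpha,\Delta}-\phi_{\alpha_0,\Delta}\|^2\gtrsim \Delta^{-1/\alpha}\log^2(\Delta)|\hat\alpha-\alpha_0|^2$ (plus a constant term off a good event), and then (Step 4) invokes Masuda's LAN in $\alpha$ with Fisher information $\asymp n\log^2(1/\Delta)$ together with Ibragimov--Has'minskii to bound the parametric risk below; this is where $\Delta<e^{-4/(2-\alpha)}$ and $n\log^2(\Delta)\ge K$ enter. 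In your version the key quantities are left as placeholders (``$I(\Delta)$ scales like $\Delta^{?}$'', ``should yield the rate''), the reduction from estimating $f_\Delta$ over the nonparametric class to estimating your parameter (the analogue of the paper's Step 2) is missing, and membership of the perturbed laws in $\mathcal F_{0,M,\alpha}^\Delta$ is not checked -- note that with $M$ at the critical value \eqref{eq:M} a downward scale perturbation violates \eqref{Ass:p}, so you must perturb upward. It is worth saying that a carefully executed two-point scale perturbation (characteristic functions $e^{-\Delta|u|^{\alpha}}$ vs.\ $e^{-\Delta(1+n^{-1/2})|u|^{\alpha}}$) may in fact give a shorter route than the paper's index-perturbation argument, since the Hellinger control reduces to differentiability in quadratic mean of a scale family with finite Fisher information and the Plancherel computation gives $\|f_1-f_2\|^2\asymp n^{-1}\Delta^{-1/\alpha}$; but that DQM/Fisher-information verification and the membership check are precisely the burden your proposal does not discharge, so as written it is a plan rather than a proof.
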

The proof of Theorem \ref{thmLB} i) relies on classical tools for establishing lower bounds based on two hypotheses. Though, we provide a proof in Section \ref{sec:proofs} as the dependence in $\Delta$ could appear unexpected for a Gaussian model. The comparison with the proof of  Theorem \ref{thmLB} ii) illustrates the significant increase in difficulty when transitioning to the pure jump case. In this setting, closed-form expressions for marginal densities of $X_{\Delta}$ are unavailable in general, making lower bounds based on two or more hypotheses particularly challenging. Even when restricting to specific densities within the class $\mathcal F_{0,M,\alpha}^\Delta$, such as symmetric stable densities, sharp bounds for the Hellinger distance, or similar metrics, are, to our knowledge, not available in the literature. Consequently, we 
propose a novel method -generalizable to other situations- that fully exploits the LAN property of symmetric stable densities. 
The key steps of the proof of Theorem \ref{thmLB} ii) are the following.
First in Step 1, we notice that the initial problem is harder than that of estimating densities belonging to $\mathcal S_{\alpha}$, the class of symmetric $\alpha_0$-stable densities with $\alpha_0\in[\alpha,2)$, i.e. 
\begin{align*}\inf_{\hat f_{\Delta}}\sup_{f_{\Delta}\in \mathcal F_{0,M,\alpha}^\Delta}\E[\|\hat f_{\Delta}-f_{\Delta}\|^{2}]\geq \inf_{\hat f_{\Delta}}\sup_{f_{\Delta}\in {\mathcal S}_{\alpha}}\E[\|\hat f_{\Delta}-f_{\Delta}\|^{2}]. \end{align*} 
The next step is to link this problem to that of parametric estimation of $\alpha$ from direct observations of a symmetric $\alpha$-stable process observed with sampling step $\Delta$. Since controlling the $L^2$ distance between densities amounts to controlling the $L^2$ distance between the characteristic functions (thanks to Plancherel's theorem), Step 3 provides a lower bound on the difference in the $L^2$ norm between the characteristic functions of stable processes.
By the end of Step 3, we obtain the following lower bound: \begin{align*}\inf_{\hat f_{\Delta}}\sup_{f_{\Delta}\in {\mathcal S}_{\alpha}}\E[\|\hat f_{\Delta}-f_{\Delta}\|^{2}]\geq\inf_{\hat \alpha\in[0,2]}\sup_{\alpha_0\in[\alpha,\alpha+\frac{2}{\log(1/\Delta)}]}\frac{1}{n\Delta^{1/\alpha}} \E[n \log^2(\Delta)|\hat \alpha-\alpha_0|^2]. \end{align*} Finally, Step 4 shows that the supremum in the right hand side is bounded from below by a constant,  concluding the proof. The two main ingredients in the proof of Step 4 are the LAN property of the family of stable distributions (see Masuda \cite{belomestny2015levy}) and a result on minimax rates for parametric problems under the LAN condition developed in \cite{ibragimov2013statistical}.
Theorem \ref{thmLB}, allows to assert that the rates found in Theorem \ref{rateUP} are nearly minimax (up to a logarithmic factor). This was by no means obvious. While it was clear that the dependency in 
$n$ of the rates could not be improved, nothing could be said about their dependency in $\Delta$ and $\alpha$. For fixed $\Delta$, the problem is marginal, but it becomes fundamental in high-frequency regimes. As far as we know, this is the first minimax optimal result for estimating the density of Lévy processes in such generality.

\subsection{Interpretation of the rate of convergence} 

On the class $\mathcal T_{+,M,\alpha}$ the rate depends on the parameter $\alpha$ through the term $\Delta^{-1}\log n>1$ for $n$ large enough or $\Delta=\Delta_{n}\to 0$. Since $\alpha\mapsto (\Delta^{-1}\log n)^{1/\alpha}$ is non increasing, the fastest rate of convergence is attained for the largest value of $\alpha$ such that $\mathcal T_{+,M,\alpha}$ is satisfied. Introduce the edge parameter 
\begin{align}\label{eq:alpha0}\alpha_{0}:=\sup\left\{\alpha\in(0,2),\  \lim\inf_{\eta\to 0}\frac{\int_{[-\eta,\eta]}x^{2}p(x)dx}{\eta^{2-\alpha}}>0\right\}. \end{align} As $\eta\in(0,1]$, the function $\alpha\mapsto \eta^{2-\alpha}$ is an increasing function of $\alpha$, Condition  \eqref{Ass:p} becomes more stringent as $\alpha$ grows. 
The results in the previous sections show that for a Lévy density $p$ with parameter $\alpha_{0}$, as in \eqref{eq:alpha0}, the rate of convergence of $\hat{f}_\Delta$ lies between $({n\Delta^{1/(\alpha_{0} - \varepsilon)}})^{-1}$ and $({n\Delta^{1/(\alpha_{0} + \varepsilon)}})^{-1}$ (up to constants) for any $\varepsilon > 0$.

Furthermore, some connections between $\alpha_{0}$ and the Blumenthal-Getoor index $\beta$ of $X$ can be established. The following proposition provides a sufficient condition for $\alpha_{0}$ to be the Blumenthal-Getoor index of $X$, and its proof can be found in Section \ref{subsec:proofpropBG}.

\begin{proposition}\label{propBG}
For any Lévy measure $\nu$, it holds that $\alpha_0\leq \beta$.
    If $\nu$ admits a density $p$, $\nu(dx) = p(x) dx,$ such that the limit 
    \begin{equation}\label{cond}\lim_{x \to 0} |x|^{1+\gamma} p(x)\quad  \text{exists for all } \gamma < \beta,
    \end{equation}
    then one gets $\alpha_0 = \beta$.
\end{proposition}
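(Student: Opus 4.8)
The plan is to establish the two assertions separately. For the inequality $\alpha_0 \le \beta$, I would argue by contradiction. Suppose $\alpha_0 > \beta$, so there is some $\alpha \in (\beta, \alpha_0)$ with $\liminf_{\eta\to 0}\eta^{\alpha-2}\int_{[-\eta,\eta]}x^2 p(x)\,dx =: 2c_0 > 0$. Then for $\eta$ small enough, $\int_{[-\eta,\eta]}x^2 p(x)\,dx \ge c_0\,\eta^{2-\alpha}$. The idea is to convert this lower bound on the truncated second moment into a lower bound on $\int_{[-1,1]}|x|^\gamma \nu(dx)$ for $\gamma$ slightly below $\alpha$ (hence $>\beta$), contradicting the definition of $\beta$. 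Concretely, one dyadically decomposes $[-1,1]\setminus\{0\}$ into annuli $A_j = \{2^{-(j+1)} < |x| \le 2^{-j}\}$; on $A_j$ one has $|x|^\gamma \ge 2^{-\gamma} x^2 (2^{-j})^{\gamma-2}$, so $\int_{A_j}|x|^\gamma\nu(dx) \gtrsim 2^{j(2-\gamma)}\int_{A_j}x^2\,p(x)\,dx$. Summing and using a telescoping/Abel-summation argument together with the lower bound $\int_{[-2^{-j},2^{-j}]}x^2 p\ge c_0 2^{-j(2-\alpha)}$ yields $\sum_j 2^{j(2-\gamma)}\cdot(\text{block sums}) = \infty$ whenever $\gamma < \alpha$; choosing $\gamma \in (\beta,\alpha)$ gives the contradiction.

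For the second assertion, assuming \eqref{cond}, I would show $\alpha_0 \ge \beta$, which combined with the first part gives equality. Fix $\gamma < \beta$; by hypothesis $\ell_\gamma := \lim_{x\to 0}|x|^{1+\gamma}p(x)$ exists in $[0,\infty]$. The key observation is that \eqref{cond} forces $\ell_\gamma = \infty$ for every $\gamma < \beta$: if $\ell_\gamma < \infty$ for some such $\gamma$, then $p(x) = O(|x|^{-1-\gamma})$ near $0$, so $\int_{[-1,1]}|x|^{\gamma'}\nu(dx) < \infty$ for any $\gamma' > \gamma$; taking $\gamma' \in (\gamma,\beta)$ contradicts the definition of $\beta$. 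Hence $|x|^{1+\gamma}p(x)\to\infty$ as $x\to 0$ for all $\gamma<\beta$. Now fix $\alpha < \beta$ and pick $\gamma \in (\alpha-1, \alpha)\cap(-\infty,\beta)$ — note $\gamma$ may be taken in $(-1,\beta)$, and $\gamma < \alpha$ — actually it is cleanest to take $\gamma = \alpha - 1$ when $\alpha>1$ and handle small $\alpha$ directly; in any case from $|x|^{1+\gamma}p(x)\to\infty$ one gets $x^2 p(x) \ge |x|^{1-\gamma}$ for $|x|$ small, and integrating over $[-\eta,\eta]$ gives $\int_{[-\eta,\eta]}x^2 p(x)\,dx \ge \frac{2}{2-\gamma}\eta^{2-\gamma} \gtrsim \eta^{2-\alpha}$ for $\eta$ small (since $2-\gamma \le 2-\alpha$ when $\gamma \ge \alpha$ — so one must instead choose $\gamma$ just below $\alpha$, using that $\ell_\gamma=\infty$ for all $\gamma<\beta$ and in particular for $\gamma$ arbitrarily close to $\alpha<\beta$). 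Thus for every $\alpha<\beta$ the $\liminf$ in \eqref{eq:alpha0} is positive, so $\alpha_0 \ge \beta$.

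I expect the main obstacle to be the bookkeeping in the first part: translating ``truncated second moment $\gtrsim \eta^{2-\alpha}$'' into divergence of $\int|x|^\gamma\nu(dx)$ requires care, because the second-moment lower bound only controls the \emph{integral} over $[-\eta,\eta]$, not the mass in a single annulus — the mass could be concentrated near the outer edge. The resolution is an Abel summation: write $b_j := \int_{[-2^{-j},2^{-j}]}x^2 p\,dx$ (so $b_j \ge c_0 2^{-j(2-\alpha)}$ and $b_j$ is nonincreasing with $b_j - b_{j+1} = \int_{A_j}x^2 p$), and compute $\sum_j 2^{j(2-\gamma)}(b_j - b_{j+1})$ by rearranging as $\sum_j b_j\big(2^{j(2-\gamma)} - 2^{(j-1)(2-\gamma)}\big) + (\text{boundary})$, which is $\gtrsim \sum_j 2^{-j(2-\alpha)}2^{j(2-\gamma)} = \sum_j 2^{j(\alpha-\gamma)} = \infty$ for $\gamma < \alpha$. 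A secondary subtlety is the one-sided versus two-sided nature of $\nu$ and the behaviour of $|x|^{1+\gamma}p(x)$ from the left and right separately; since \eqref{cond} is stated as a two-sided limit at $0$, this is not an issue, but it is worth a remark. The rest is routine.
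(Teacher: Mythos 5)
Your two-part structure matches the paper's, and your route through the second assertion is a genuine alternative: instead of the paper's l'H\^opital computation, which identifies $\liminf_{\eta\to0}\eta^{\gamma-2}\int_{-\eta}^{\eta}x^2p(x)\,dx$ with $\frac{2}{2-\gamma}\lim_{\eta\to0}\eta^{1+\gamma}p(\eta)$ and then derives a contradiction from $\alpha_0<\beta$, you first force $\lim_{x\to0}|x|^{1+\gamma}p(x)=+\infty$ for every $\gamma<\beta$ (interpreting ``exists'' in $[0,+\infty]$, as one must, and as the paper implicitly does) and then read off the truncated second moment directly; that is sound and arguably more elementary. The problem is your final step, which is stated the wrong way around: you need $\eta^{2-\gamma}\gtrsim\eta^{2-\alpha}$, i.e.\ $\gamma\ge\alpha$, yet you settle on ``$\gamma$ just below $\alpha$'', for which $\eta^{2-\gamma}=o(\eta^{2-\alpha})$ and the liminf in \eqref{eq:alpha0} at exponent $\alpha$ is \emph{not} controlled (the arbitrarily large constant $K$ in $x^2p(x)\ge K|x|^{1-\gamma}$ cannot compensate a vanishing power of $\eta$). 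The repair is immediate: since $\alpha<\beta$, the hypothesis applies at $\gamma=\alpha$ itself, so $x^2p(x)\ge K|x|^{1-\alpha}$ near $0$ and $\liminf_{\eta\to0}\eta^{\alpha-2}\int_{-\eta}^{\eta}x^2p(x)\,dx\ge\frac{2K}{2-\alpha}>0$; equivalently, prove positivity of the liminf at exponent $\gamma$ for every $\gamma<\beta$ and let $\gamma\uparrow\beta$.

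For the inequality $\alpha_0\le\beta$, your dyadic/Abel argument also has an unjustified step. After summation by parts the partial sum equals $b_0+\sum_{k=1}^{N}\bigl(2^{k(2-\gamma)}-2^{(k-1)(2-\gamma)}\bigr)b_k-2^{N(2-\gamma)}b_{N+1}$, and the discarded boundary term is itself of order at least $2^{N(\alpha-\gamma)}$ (because $b_{N+1}\ge c_0 2^{-(N+1)(2-\alpha)}$), i.e.\ of exactly the same order as the divergent series you keep, so the bound ``$\gtrsim\sum_j 2^{j(\alpha-\gamma)}$'' does not follow as written. It can be patched: if $\int_{|x|\le1}|x|^{\gamma}\nu(dx)<\infty$ then $b_N\le 2^{-N(2-\gamma)}\int_{|x|\le 2^{-N}}|x|^{\gamma}\nu(dx)$, contradicting $b_N\ge c_0 2^{-N(2-\alpha)}$ since $\gamma<\alpha$. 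But note that this patch \emph{is} the paper's one-line proof: on $[-\eta,\eta]$ one has $x^2\le\eta^{2-\gamma_1}|x|^{\gamma_1}$, hence $\eta^{\gamma_2-2}\int_{-\eta}^{\eta}x^2\nu(dx)\le\eta^{\gamma_2-\gamma_1}\int_{-\eta}^{\eta}|x|^{\gamma_1}\nu(dx)\to0$ whenever $\beta<\gamma_1<\gamma_2$, so the dyadic decomposition and Abel summation are unnecessary.
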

Alternative hypotheses implying  $\alpha_0 = \beta$ exist; for example, assuming that the characteristic exponent of $X$ is regularly varying at infinity as established in Proposition 6.3 in \cite{belomestny2010spectral}. Example \ref{ex:BGnot} below shows that Condition \eqref{cond} is not necessary.

\begin{example}\label{ex:BGnot}
Let $0\le\alpha<\beta< 2$, and consider the Lévy density defined as 
\[p_{0}(x):=\left(\frac{1}{|x|^{\alpha+1}}+\frac{1}{|x|^{\beta+1}}\frac{1+\sin\big({|x|^{-1}}\big)}{2}\right)\mathbf{1}_{x\ne 0}.\] 
Such a function oscillates infinitely often in a neighborhood of 0 between the levels $|x|^{-\alpha-1}$ and $|x|^{-\beta-1}$, and is not regularly varying around 0. Nevertheless, standard computations based on classical integral and series comparisons show that for this Lévy density  $\alpha_{0}=\beta$.
\end{example}

Finally, we show that it is possible to have $\alpha_0 < \beta$  with a pathological example;  the proof is postponed to Section \ref{subsec:proofexample}.
\begin{example}\label{ex:alphaBG}
Let $\eta_k = 2^{-2^k}$, {$k\in\N$}, which defines a partition $I_k = (\eta_{k+1}, \eta_k]$ of $[0, \frac{1}{2}]$, and consider $I_{even} = \bigcup_{k \geq 0} I_{2k}$ and $I_{odd} = \bigcup_{k \geq 0} I_{2k+1}$. For the Lévy density defined as $$p(x) = \frac{1}{x^2}\mathbf{1}_{I_{odd}}(x)+ \frac{1}{x^\frac{3}{2}}\mathbf{1}_{I_{even}}(x),$$ it holds that $\alpha_0 = \frac{1}{2}$ and $\beta= 1$. Clearly, exponents may be adjusted to achieve any value of $\alpha_0$ and $\beta$.
\end{example}

\subsection{Adaptive estimator\label{sec:adapt}}

We propose an adaptive estimator that  attains the bound of Theorem \ref{thm1}. This procedure is inspired by the recent thresholded procedure introduced in \cite{ammous2024adaptive}. If the procedure induces a logarithmic loss, it has the advantages of that its proof  relies on simple arguments, is simple to implement and performs well numerically. A penalized approach, as developed in \cite{comte2010nonparametric}, would also lead to an adaptive estimator.

Let $\kappa>0$ be a positive constant and set $\kappa_{n}:=(1+\kappa\sqrt{\log n})$. Consider the estimator of $f_\Delta$ defined by
\begin{align}\label{eq:estfbis}\tilde{f}_\Delta(x)  = \frac{1}{2\pi} \int_{[-n,n]}e^{- i ux} \tilde{\phi}_{X_{\Delta}}(u) { d}u,\quad x\in{\mathbb R} {,} 
\end{align}
where 
\begin{align}
\tilde{\phi}_{X_{\Delta}}(u)=\hat{\phi}_{X_{\Delta}}(u) \mathbf{1}_{\{|\hat{\phi}_{X_{\Delta}}(u)|\geq {\kappa}_n n^{-1/2}\}},\quad u\in{\mathbb R}.
\end{align} 

\begin{theorem}\label{thm:adapt} Under the assumptions of Theorem \ref{thm1}, the adaptive estimator satisfies the following inequality, for any $\kappa>0$,\begin{align*}
{\mathbb E}[  \| \tilde{f}_\Delta- f_{\Delta}\|^2 ]&  \leq \inf_{m\in(0,n]}\left\{9 \|f_{\Delta,m}-f_{\Delta}\|^{2} +\frac m{\pi n} (5+(1+(\kappa+2)\sqrt{\log n})^{2})\right\} +64n^{1-\frac{\kappa^{2}}{4}}.\end{align*}
\end{theorem}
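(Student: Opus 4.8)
The plan is to control the $L^2$ risk of $\tilde f_\Delta$ by decomposing it relative to the oracle estimator $\hat f_{\Delta,m}$ of \eqref{eq:estFg} for an arbitrary cutoff $m\in(0,n]$, and then optimizing over $m$. First I would write, via Plancherel applied to $\tilde\phi_{X_\Delta}\ind_{[-n,n]}$ and $\mathcal F f_\Delta$,
\begin{align*}
2\pi\,\E[\|\tilde f_\Delta-f_\Delta\|^2]=\E\!\left[\int_{|u|\le n}|\tilde\phi_{X_\Delta}(u)-\phi_{X_\Delta}(u)|^2du\right]+\int_{|u|>n}|\phi_{X_\Delta}(u)|^2du,
\end{align*}
and absorb the tail term into the bias $\|f_{\Delta,n}-f_\Delta\|^2\le\|f_{\Delta,m}-f_\Delta\|^2$ for $m\le n$. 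On the event $A_u=\{|\hat\phi_{X_\Delta}(u)|\ge\kappa_n n^{-1/2}\}$ one has $\tilde\phi_{X_\Delta}(u)=\hat\phi_{X_\Delta}(u)$, and on $A_u^c$ one has $\tilde\phi_{X_\Delta}(u)=0$; so the integrand splits as
\begin{align*}
|\tilde\phi_{X_\Delta}(u)-\phi_{X_\Delta}(u)|^2=|\hat\phi_{X_\Delta}(u)-\phi_{X_\Delta}(u)|^2\ind_{A_u}+|\phi_{X_\Delta}(u)|^2\ind_{A_u^c}.
\end{align*}

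Next I would split the $u$-integral at the oracle cutoff $m$. For $|u|\le m$ I bound $\ind_{A_u}\le 1$ and use $\E|\hat\phi_{X_\Delta}(u)-\phi_{X_\Delta}(u)|^2\le 1/n$ (variance of an empirical characteristic function), giving a contribution $\le m/n$ from the high-probability region, plus the term $|\phi_{X_\Delta}(u)|^2\ind_{A_u^c}$ which is small because $A_u^c$ forces $|\hat\phi_{X_\Delta}(u)|<\kappa_n n^{-1/2}$ while $|\phi_{X_\Delta}(u)|$ might be large — this is the delicate case and is handled by Bernstein/Hoeffding: if $|\phi_{X_\Delta}(u)|\ge 2\kappa_n n^{-1/2}$ then $A_u^c\subset\{|\hat\phi_{X_\Delta}(u)-\phi_{X_\Delta}(u)|\ge\kappa_n n^{-1/2}\}$, whose probability is $\le 2e^{-c\kappa_n^2}=2e^{-c(1+\kappa\sqrt{\log n})^2}$, controllably $\lesssim n^{-\kappa^2/4}$; and if $|\phi_{X_\Delta}(u)|<2\kappa_n n^{-1/2}$ then $|\phi_{X_\Delta}(u)|^2\ind_{A_u^c}\le 4\kappa_n^2/n$. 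Integrating over $|u|\le m$ yields the $\tfrac{m}{\pi n}(5+(1+(\kappa+2)\sqrt{\log n})^2)$-type term once constants are tracked. For $m<|u|\le n$ I use the reverse idea: on $A_u$, $|\hat\phi_{X_\Delta}(u)|\ge\kappa_n n^{-1/2}$, so either $|\phi_{X_\Delta}(u)|\ge\tfrac12\kappa_n n^{-1/2}$ — in which case $|\hat\phi_{X_\Delta}(u)-\phi_{X_\Delta}(u)|^2\ind_{A_u}$ is bounded and contributes via the deviation probability times the diameter $2n$, producing the $64\,n^{1-\kappa^2/4}$ remainder — or $|\phi_{X_\Delta}(u)|<\tfrac12\kappa_n n^{-1/2}$, forcing a large deviation again; the complementary term $|\phi_{X_\Delta}(u)|^2\ind_{A_u^c}\le|\phi_{X_\Delta}(u)|^2$ integrates to $\le\|f_{\Delta,n}-f_{\Delta,m}\|^2\le\|f_{\Delta,m}-f_\Delta\|^2$ up to the $2\pi$, and with the earlier pieces this accounts for the factor $9$ on the bias after collecting all the $\|f_{\Delta,m}-f_\Delta\|^2$ contributions.

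The two concentration bounds I need are: (i) $\E|\hat\phi_{X_\Delta}(u)-\phi_{X_\Delta}(u)|^2\le 1/n$, which is immediate since $\hat\phi_{X_\Delta}(u)$ is an average of $n$ i.i.d. unit-modulus variables; and (ii) a Hoeffding-type inequality $\PP(|\hat\phi_{X_\Delta}(u)-\phi_{X_\Delta}(u)|\ge t)\le 4e^{-nt^2/C}$ applied with $t$ proportional to $\kappa_n n^{-1/2}$, so that $e^{-n t^2/C}\le e^{-\kappa^2\log n/4}=n^{-\kappa^2/4}$ after choosing the numerical constant; the $(\kappa+2)$ inside the square in the main term comes from replacing $\kappa_n$-thresholds by $(\kappa+2)$-enlarged versions when comparing $|\phi_{X_\Delta}(u)|$ to the data-driven threshold. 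The main obstacle — and the only genuinely non-routine part — is the bookkeeping in the regime $m<|u|\le n$: one must ensure that the "bad" event where $\tilde\phi$ keeps a spurious large value is rare enough that multiplying its probability by the full length $2n$ of the frequency window still gives something negligible, which is exactly why the threshold is inflated by $\sqrt{\log n}$ and why the residual takes the form $64\,n^{1-\kappa^2/4}$; the rest is a careful but standard splitting-and-integrating argument. Finally, taking the infimum over $m\in(0,n]$ gives the stated bound.
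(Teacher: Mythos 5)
Your overall strategy is the same as the paper's: Plancherel, split the frequency domain at the oracle cutoff $m$, decompose $|\tilde\phi_{X_\Delta}-\phi_{X_\Delta}|^2=|\hat\phi_{X_\Delta}-\phi_{X_\Delta}|^2\ind_{A_u}+|\phi_{X_\Delta}|^2\ind_{A_u^c}$, and use the variance bound $1/n$ on $[-m,m]$ together with Hoeffding-type deviations for the thresholding events. The treatment of $[-m,m]$ is essentially correct (up to the fact that splitting at $2\kappa_n n^{-1/2}$ rather than at $(1+(\kappa+2)\sqrt{\log n})n^{-1/2}$ and invoking a generic $2e^{-c\kappa_n^2}$ bound will not reproduce the exact constants $5+(1+(\kappa+2)\sqrt{\log n})^2$ of the statement; the paper forces the deviation $|\hat\phi_{X_\Delta}-\phi_{X_\Delta}|\ge 2\sqrt{\log n}/\sqrt n$, whose probability is $\le 4/n$, precisely to keep this term of order $m/n$ times the stated factor).

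However, there is a genuine gap in your handling of the region $m<|u|\le n$: your case analysis is inverted. When $|\phi_{X_\Delta}(u)|\ge\tfrac12\kappa_n n^{-1/2}$, the event $A_u$ is \emph{typical} (its probability can be close to $1$), so no deviation bound applies, and ${\mathbb E}[|\hat\phi_{X_\Delta}(u)-\phi_{X_\Delta}(u)|^2\ind_{A_u}]$ is genuinely of order $1/n$; your claim that this case "contributes via the deviation probability times the diameter $2n$" is false, and if instead you integrate the $1/n$ bound over a window of length up to $2n$ you get a constant-order term that destroys the rate. The missing idea — which is exactly what the paper does with the set $\Phi=\{u:|\phi_{X_\Delta}(u)|>n^{-1/2}\}$ — is to \emph{absorb} the $O(1/n)$ variance into $|\phi_{X_\Delta}(u)|^2$ (legitimate precisely because $|\phi_{X_\Delta}(u)|^2\gtrsim 1/n$ in this case), turning it into an additional bias contribution; this absorption is also where part of the factor $9$ in front of $\|f_{\Delta,m}-f_\Delta\|^2$ comes from. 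The deviation-probability-times-$2n$ argument, with $|\hat\phi_{X_\Delta}-\phi_{X_\Delta}|^2\le 4$, belongs to the complementary case where $|\phi_{X_\Delta}(u)|$ is below the threshold scale but $|\hat\phi_{X_\Delta}(u)|\ge\kappa_n n^{-1/2}$, which forces $|\hat\phi_{X_\Delta}(u)-\phi_{X_\Delta}(u)|\ge\kappa\sqrt{\log n}/\sqrt n$ and yields the $64\,n^{1-\kappa^2/4}$ remainder; you mention this case only in passing and attach the remainder to the wrong one. With the case analysis corrected (absorb into bias when $|\phi_{X_\Delta}|$ is large, deviation bound when it is small) and the thresholds chosen as in the paper so the constants close, your argument becomes the paper's proof.
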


Theorem \ref{thm:adapt} is an oracle inequality and ensures, in particular, that the adaptive estimator $\tilde f_{\Delta}$ attains the optimal rate of convergence over the classes introduced in Definition \ref{def:T} for any $\kappa\ge2\sqrt2$.
Proof of Theorem \ref{thm:adapt} follows the lines of the proof of Theorem 1 in \cite{ammous2024adaptive}. However, since the procedure is new and the context slightly different, with the law of observations depending on the sampling rate $\Delta$, the proof is provided and tailored to our specific setting.

\subsection{Case of vanishing Gaussian component}

The fact that the rate of convergence of $\hat{f}_{\Delta,m}$ is unaffected by the presence or absence of the Lévy measure when $\sigma\neq 0$ is intuitive when jumps are rare. However, in a scenario where the jump activity  is exceedingly high while the volatility $\sigma$ is relatively low, though  positive, we anticipate that the presence of jumps might influence the final convergence rate. We study this framework on the class $\mathcal T_{+,M,\alpha}$, see Definition \ref{def:T}. Let $\sigma=\sigma_n\to0$ as $n\to\infty$, the rate of convergence of the estimator is closely related to the rate at which $\sigma_n$ decays. In this context, we identify two distinct behaviors leading to different convergence rates. In the following, while $\Delta>0$ remains fixed, its role is  explicitly included in the asymptotic expressions to emphasize its effect on the convergence rates.
\begin{theorem}\label{thm:rate_sigma}
Let $(X_n)_{n>0}$ be a sequence of Lévy processes with triplets $(b,\sigma^2_n,\nu)\in \mathcal{T}_{+,M,\alpha}$, for some $M>0$, and $\alpha \in (0,2)$. Let $\Delta>0$ be fixed. There exist $C,\tilde C>0$ positive constants only depending on $M$ and $\alpha$ such that:
\begin{enumerate}
\item If $\sigma_n (\log n)^{1/\alpha-1/2} \underset{n\rightarrow \infty}{\rightarrow} \infty$, selecting $m_{n}^\star=O\left(\sqrt{{\log n}/({\Delta \sigma_n^2})}\right)$ leads to the rate
\begin{align*}
\E[\|\hat{f}_{\Delta,m_{n}^\star}-f_{\Delta}\|^2]\leq  C \frac{\sqrt{\log n}}{\sigma_n n \Delta^{1/2}}.
\end{align*}
\item If $\sigma_n ( \log n)^{1/\alpha-1/2}{=}O(1)$ as $n\to \infty$, selecting $m_{n}^\star=O\left({(\log n)^{1/\alpha}}{\Delta^{-1/\alpha}}\right)$ leads to the rate \begin{align*}
\E[\|\hat{f}_{\Delta,m^\star}-f_{\Delta}\|^2]\leq    \tilde C \frac{(\log n)^{1/\alpha}}{n\Delta^{1/\alpha}}.
\end{align*}
\end{enumerate}
\end{theorem}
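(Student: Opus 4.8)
\textbf{Proof plan for Theorem \ref{thm:rate_sigma}.}

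The plan is to combine the generic risk bound of Theorem \ref{thm1}, namely $\E[\|\hat f_{\Delta,m}-f_\Delta\|^2]\le \|f_{\Delta,m}-f_\Delta\|^2 + m/(\pi n)$, with a careful analysis of the bias expression \eqref{biais}, exploiting the fact that the triplet belongs to $\mathcal T_{+,M,\alpha}$ so that \emph{both} the Gaussian factor $e^{-\Delta\sigma_n^2 u^2}$ and the jump factor $\exp(2\Delta\int_\R(\cos(ux)-1)\nu(dx))$ are available to control the tail integral. The first step is to bound the jump factor: using Assumption \eqref{Ass:p}, one shows (this is exactly the estimate underlying the $\mathcal T_{0,M,\alpha}$ case of Theorem \ref{rateUP}) that $2\Delta\int_\R(\cos(ux)-1)\nu(dx)\le -c\,M\Delta|u|^\alpha$ for $|u|$ large, for some numerical constant $c>0$ coming from $\int_{[-\eta,\eta]}x^2p(x)dx\ge M\eta^{2-\alpha}$ and the elementary inequality $1-\cos t\ge c t^2\1_{|t|\le 1}$. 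Hence the bias obeys
\begin{align*}
2\pi\|f_{\Delta,m}-f_\Delta\|^2\le \int_{|u|\ge m}\exp\!\big(-\Delta\sigma_n^2 u^2 - cM\Delta|u|^\alpha\big)\,du,
\end{align*}
so we have a product of two decaying exponentials and are free to keep whichever dominates for the given range of $u\ge m$.

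The second step is the dichotomy on $\sigma_n$. In case (1), where $\sigma_n(\log n)^{1/\alpha-1/2}\to\infty$, we drop the jump factor, use the Gaussian tail bound $\int_{|u|\ge m}e^{-\Delta\sigma_n^2u^2}du\lesssim (\Delta\sigma_n^2 m)^{-1}e^{-\Delta\sigma_n^2 m^2}$, and choose $m_n^\star=\sqrt{\log n/(\Delta\sigma_n^2)}$ exactly as in the $\mathcal T_+$ case of Theorem \ref{rateUP}; this makes the squared bias $O\big((n\sqrt{\log n})^{-1}\big)$, negligible against the variance term $m_n^\star/(\pi n)=O\big(\sqrt{\log n}/(\sigma_n n\Delta^{1/2})\big)$, giving the stated rate. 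In case (2), where $\sigma_n(\log n)^{1/\alpha-1/2}=O(1)$, we instead drop the Gaussian factor (keeping it would not help and costs nothing to discard), use the jump factor alone with the bound $\int_{|u|\ge m}e^{-cM\Delta|u|^\alpha}du\lesssim (M\Delta m^{\alpha-1})^{-1}e^{-cM\Delta m^\alpha}$, and pick $m_n^\star=O\big((\log n/(M\Delta))^{1/\alpha}\big)$ as in the $\mathcal T_{0,M,\alpha}$ case; the squared bias is again $O\big(n^{-1}(\log n)^{\text{power}}\big)$ up to logs and dominated by the variance $m_n^\star/(\pi n)=O\big((\log n)^{1/\alpha}/(n\Delta^{1/\alpha})\big)$, yielding the second rate. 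The key point making case (2) work is that the regime condition $\sigma_n(\log n)^{1/\alpha-1/2}=O(1)$ is precisely what guarantees $\Delta\sigma_n^2 (m_n^\star)^2 = O(\log n)\cdot\sigma_n^2(\log n)^{2/\alpha-1}=O(\log n)$ is not larger than $\log n$, so that discarding the Gaussian term loses at most a constant factor and the jump-driven cutoff is the binding one.

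The main obstacle, and the only place requiring genuine care rather than bookkeeping, is making the threshold $\sigma_n(\log n)^{1/\alpha-1/2}$ appear naturally as the exact frontier between the two regimes: one must verify that at $m=m_n^\star$ the two exponents $\Delta\sigma_n^2m^2$ and $cM\Delta m^\alpha$ cross precisely when $\sigma_n^2(\log n)\asymp (\log n)^{2/\alpha}$, i.e. $\sigma_n\asymp (\log n)^{1/\alpha-1/2}$, and that in each regime the chosen $m_n^\star$ indeed keeps the squared bias at order $o(m_n^\star/n)$; the polynomial-in-$\log n$ prefactors from the tail integrals must be checked to be absorbed. I would also remark, as in Theorem \ref{rateUP}, that the argument is non-asymptotic in $n$ and that $\Delta$ being fixed only enters through the explicit constants, so no new difficulty arises from the dependence on $\Delta$.
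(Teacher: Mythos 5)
Your proposal is correct and follows essentially the same route as the paper: bound the bias by $\int_{|u|\ge m}e^{-\Delta\sigma_n^2u^2-c_\alpha\Delta|u|^\alpha}du$ with $c_\alpha\asymp M(2/\pi)^\alpha$ coming from Assumption \eqref{Ass:p}, choose $m_n^\star$ so that the dominant exponent equals $\log n$, and check that the squared bias is $o(m_n^\star/n)$ so the variance term drives both rates, with the frontier $\sigma_n\asymp(\log n)^{1/2-1/\alpha}$. The only cosmetic difference is that the paper defines $m^\star$ through the implicit equation $\sigma_n^2\Delta m^2+c_\alpha\Delta m^\alpha=\log n$ and keeps both exponential factors in the final bias estimate, whereas you discard the non-dominant factor in each regime; both yield the same cutoffs and constants depending only on $M$ and $\alpha$.
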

The convergence rates in Theorem \ref{thm:rate_sigma} are determined by  the interplay between the Brownian component and the contribution from jumps. When $\sigma_n$ is asymptotically slower than $(\log n)^{1/2-1/\alpha}$,  the convergence rate  is similar to the case where $\sigma_n$ is fixed. This result indicates that when the contribution of jumps is negligible, the rate remains governed by the Brownian component. 
In the second case, the decay of $\sigma_n$ is either of the order of $(\log n)^{1/2 - 1/\alpha}$, with convergence rates matching those observed in the pure Brownian motion setting or in the pure jump context.
Alternatively, if $\sigma_n$ decays faster than $(\log n)^{1/2 - 1/\alpha}$, the convergence rate coincides with that of the pure jump model.
As the influence of the Brownian component diminishes, the convergence rate becomes increasingly dominated by the jumps.

\section{Numerical illustration}\label{sec:numerics}

We illustrate the numerical performance of the adaptive estimator $\tilde{f}_\Delta$ defined in \eqref{eq:estfbis} for three pure jump stable processes, whose increments are easy to implement. In particular, we focus on stable Lévy processes $X$ with triplet $(0,0,\nu)$ and Lévy density 
\[
p(x) = \frac{P}{x^{1+\alpha}}\ind_{x>0} + \frac{Q}{|x|^{1+\alpha}} \ind_{x<0}
\]
belonging to $\mathcal{T}_{ \frac{P+Q}{2-\alpha},\alpha}$, where $\alpha \in (0,2)$. The distribution of $X_{\Delta}$ is then a stable distribution with parameters
{\small
\[
\left( \alpha, \Delta^{1/\alpha} \left( \frac{\Gamma(1-\alpha)}{\alpha} (P+Q)\cos(\pi \alpha/2) \right)^{1/\alpha}, \frac{P-Q}{P+Q}, \frac{Q-P}{2-\alpha} \Delta \right).
\]
}
In general, the associated density $f_{\Delta}$ is not explicit, but efficient algorithms to compute it numerically are available (see \cite{nolan2020univariate}). We consider three cases: a finite variation process with $(P,Q,\alpha) = (2,1,0.7)$, an infinite variation process with $(P,Q,\alpha) = (2,1,1.7)$, and a symmetric Cauchy process with $(P,Q,\alpha) = (\pi^{-1}, \pi^{-1}, 1)$. In the latter case, the density $f_\Delta$ is explicit and is given by $
f_\Delta(x) = \frac{\Delta}{\pi(x^2 + \Delta^2)},  \text{for } x \in \mathbb{R}.$
The performance of the estimator is evaluated by comparing its risks for different values of $n \in \{500, 1000, 5000\}$ and $\Delta \in \{0.1, 1\}$. To facilitate comparison between the different examples, for which $\|f_\Delta\|^2$ may vary, we compute an empirical version of the relative $L^2$ error, defined as $\frac{\E[\| \tilde{f}_\Delta - f_\Delta \|^2]}{\| f_\Delta \|^2}$, obtained via Monte Carlo simulation over 100 independent trajectories. For computational cost reasons, the domain where $\phi_\Delta$ is estimated is fixed to $[-100,100]$ for $\Delta = 0.1$ and to $[-10,10]$ for $\Delta = 1$. Indeed, when $\Delta$ is small, as the theoretical characteristic function $\phi_\Delta$ approaches 1 over a large interval, a larger integration domain must be considered.

As with most adaptive methods, the estimator $\tilde{f}_{\Delta}$ depends on the choice of a parameter $\kappa$. We apply the calibration strategy proposed in \cite{ammous2024adaptive}, Section 2.3, which relies on the Euler characteristic of the unthresholded areas 
\[
A_{n}(\kappa) := \{u \in [-n, n] :  |\hat{\phi}_{X_\Delta}(u)| \geq (1 + \kappa \sqrt{\log n}) n^{-1/2} \}.
\]
In dimension 1, the Euler characteristic, denoted by $\chi$, counts the number of connected components. Considering the mapping $\kappa \mapsto \chi(A_{n}(\kappa))$, the following global behavior is expected. If $\kappa$ is small, $\chi(A_{n}(\kappa))$ should be large, as even in regions where the true characteristic function is close to 0, fluctuations of the estimated characteristic function make it exceed the threshold. Conversely, when $\kappa$ is large, these fluctuations are no longer visible, and $\chi(A_{n}(\kappa))$ should stabilize.
This motivates the following empirical choice for $\kappa$. Given a uniform grid of possible values for $\kappa$: $\{k\delta, k \in N\}$, for some $N \in \mathbb{N}$ and $\delta > 0$, we select the first value for which the Euler characteristic stabilizes, namely
\[
\hat{\kappa_n} = \inf \{k\delta \mid k \in \{1, \ldots, N\}, \chi(A_{n}(k\delta)) = \chi(A_{n}((k-1)\delta)) \}.
\]
However, numerically this choice is very sensitive to the selection of $\delta$. To reduce this sensitivity, we impose that $\chi(A_{n}(k\delta))$ remains stable for three consecutive values:
\[
\tilde{\kappa}_n = \inf \{ k\delta \mid k \in \{2,\ldots,N\}, \chi(A_{n}(k\delta)) = \chi(A_{n}((k-1)\delta)) = \chi(A_{n}((k-2)\delta))  \}.
\]

\begin{table}[t]
\begin{center}
\begin{tabular}{|c|c|c|c|c|c|}
   \hline 
$\Delta$ & $n$ & 500 & 1000 & 5000 & 10000\\
   \hline 
    \multirow{4}{*}{0.1} &      \multirow{2}{*}{$\tfrac{\nrm{\tilde{f}_{\Delta} - f_\Delta}^2}{\nrm{f_\Delta}^2}$} & $5.62 \times 10^{-1}$ &$3.03 \times 10^{-2}$ &$8.89 \times 10^{-3}$ & $5.21 \times 10^{-3}$\\
     & & \small$(2.42\times 10^{-2})$ &  \small$(3.49 \times 10^{-2})$& \small$(4.65 \times 10^{-3})$ & \small$(2.93\times 10^{-3})$\\
     \cline{2-6}
&    {$\overline{\tilde{\kappa}_n}$} & 1.02 \small(0.53)&0.92 \small(0.56)&0.92 \small(0.36)&1.01 \small(0.39)\\
     \hline 
    \hline 
    \multirow{4}{*}{1} &      \multirow{2}{*}{$\tfrac{\nrm{\tilde{f}_{\Delta} - f_\Delta}^2}{\nrm{f_\Delta}^2}$} & $4.70 \times 10^{-2}$ &  $2.59 \times 10^{-2}$&$ 6.95\times 10^{-3}$ & $ 5.10\times 10^{-3}$ \\
     & & \small$(2.11 \times 10^{-2})$&\small$(1.48 \times 10^{-2})$ &\small$(2.79 \times 10^{-3})$ &\small$(2.45 \times 10^{-3})$\\
     \cline{2-6}
&    {$\overline{\tilde{\kappa}_n}$} & 0.86 \small(0.21) &0.84  \small(0.18) &0.79 \small(0.19) &1.01  \small(0.33)\\
   \hline 
\end{tabular}
\end{center}
\caption{ Mean and standard deviation (in parentheses) of the relative $L^2$ risk and selected parameter $\kappa$ for $100$ estimators for the $0.7-$stable process.
 }
\label{table:stable_FV}

\end{table}

\begin{table}[t]
\begin{center}
\begin{tabular}{|c|c|c|c|c|c|}
   \hline 
$\Delta$ & $n$ & 500 & 1000 & 5000 & 10000\\
   \hline 
    \multirow{4}{*}{0.1} &      \multirow{2}{*}{$\tfrac{\nrm{\tilde{f}_{\Delta} - f_\Delta}^2}{\nrm{f_\Delta}^2}$} & $2.50 \times 10^{-2}$ &$1.36 \times 10^{-2}$ &$3.44 \times 10^{-3}$ & $1.92 \times 10^{-3}$\\
     & & \small$(1.44\times 10^{-2})$ &  \small$(7.36 \times 10^{-3})$& \small$(1.76 \times 10^{-3})$ & \small$(1.01\times 10^{-3})$\\
     \cline{2-6}
&    {$\overline{\tilde{\kappa}_n}$} & 0.69 \small(0.41)&0.78 \small(0.38)&0.78 \small(0.37)&0.80 \small(0.40)\\
     \hline 
    \hline 
    \multirow{4}{*}{1} &      \multirow{2}{*}{$\tfrac{\nrm{\tilde{f}_{\Delta} - f_\Delta}^2}{\nrm{f_\Delta}^2}$} & $2.10 \times 10^{-2}$ &  $1.25 \times 10^{-3}$&$ 3.03\times 10^{-3}$ & $ 1.75\times 10^{-3}$ \\
     & & \small$(1.10 \times 10^{-2})$&\small$(7.34 \times 10^{-3})$ &\small$(1.53 \times 10^{-3})$ &\small$(9.24 \times 10^{-4})$\\
     \cline{2-6}
&    {$\overline{\tilde{\kappa}_n}$} & 0.68 \small(0.39) &0.76  \small(0.36) &0.78 \small(0.36) &0.77  \small(0.34)\\
   \hline 
\end{tabular}
\end{center}
\caption{ Mean and standard deviation (in parentheses) of the relative $L^2$ risk and selected parameter $\kappa$ for $100$ estimators for the $1$-stable process.
 }
\label{table:cauchy}

\end{table}

\begin{table}[t]
\begin{center}
\begin{tabular}{|c|c|c|c|c|c|}
   \hline 
$\Delta$ & $n$ & 500 & 1000 & 5000 & 10000\\
   \hline 
    \multirow{4}{*}{0.1} &      \multirow{2}{*}{$\tfrac{\nrm{\tilde{f}_{\Delta} - f_\Delta}^2}{\nrm{f_\Delta}^2}$} & $3.16 \times 10^{-2}$ &$1.70 \times 10^{-2}$ &$2.37 \times 10^{-3}$ & $8.48 \times 10^{-4}$\\
     & & \small$(3.23\times 10^{-2})$ &  \small$(1.88 \times 10^{-2})$& \small$(2.37 \times 10^{-3})$ & \small$(1.01\times 10^{-3})$\\
     \cline{2-6}
&    {$\overline{\tilde{\kappa}_n}$} & 0.41 \small(0.14)&0.39 \small(0.15)&0.41 \small(0.12)&0.44 \small(0.08)\\
     \hline 
    \hline 
    \multirow{4}{*}{1} &      \multirow{2}{*}{$\tfrac{\nrm{\tilde{f}_{\Delta} - f_\Delta}^2}{\nrm{f_\Delta}^2}$} & $3.21 \times 10^{-2}$ &  $1.50 \times 10^{-2}$&$ 1.85\times 10^{-3}$ & $ 8.45\times 10^{-4}$ \\
     & & \small$(2.60 \times 10^{-2})$&\small$(1.21 \times 10^{-2})$ &\small$(1.82 \times 10^{-3})$ &\small$(6.99 \times 10^{-4})$\\
     \cline{2-6}
&    {$\overline{\tilde{\kappa}_n}$} & 0.25 \small(0.13) &0.25  \small(0.13) &0.32 \small(0.12) &0.32  \small(0.12)\\
   \hline 
\end{tabular}
\end{center}
\caption{ Mean and standard deviation (in parentheses) of the relative $L^2$ risk and the selected parameter $\kappa$ for 100 estimators for the $1.7$-stable process.}
\label{table:stable_IV}
\end{table}

As expected, we observe in Tables \ref{table:stable_FV}, \ref{table:cauchy}, and \ref{table:stable_IV} improvements in the risks as $n$ increases and for larger values of $\Delta$ and $\alpha$. This comparison is possible because we consider the relative risks rather than the classical MISE. These results are consistent with the theoretical rates of convergence.
The selected values for $\kappa$ seem to vary with $\Delta$ and $\alpha$, but not with $n$. This is in line with the fact that the domain where $\phi_\Delta$ takes large values depends on $\alpha$ and $\Delta$, but not on $n$.

\section{Proofs}\label{sec:proofs}

\subsection{Proof of Theorem \ref{rateUP}}
To derive the rates of convergence we perform a square bias and variance compromise. Define $V(m)=m/(\pi n)$. Controls over the square bias term are obtained via \eqref{biais}, we easily derive:
\begin{itemize}
\item If $X$ is a Lévy process with Lévy triplet $(b,\sigma^2,\nu)\in\mathcal T_{+}$, then 
\begin{align}\label{eq:BiasB}\|f_{\Delta,m}-f_{\Delta}\|^{2}\leq \frac{1}{\pi}\int_m^{\infty} e^{-\Delta \sigma^2 u^2}du=:b^{2}_{1}(m),\quad \forall \Delta>0,\ m\geq 0.\end{align}
\item Let $M>0$ and $\alpha\in(0,2)$ be given. If the Lévy triplet $(b,\sigma^2,\nu)\in\mathcal T_{0,M,\alpha}$, then  Lemma \ref{lemmapicard} in the Appendix allows to derive 
\begin{align}\label{eq:BiasJ}\|f_{\Delta,m}-f_{\Delta}\|^{2}\leq \frac{\Gamma\Big(\frac{1}{\alpha}, \big(\frac{2m}{\pi}\big)^\alpha M\Delta\Big)}{2\alpha (M\Delta)^{\frac{1}{\alpha}}}=:b^{2}_{2}(m),\quad \forall \Delta>0,\ m\geq \frac{\pi}{2}.\end{align}
\end{itemize}
If $(b,\sigma^2,\nu)\in\mathcal T_+$, we compute the minimizer $m^\star$ of $b^{2}_{1}(m)+V(m)$. By computing the derivative, we find that $
m^\star = \sqrt{{\log(n)}{/(\Delta \sigma^2)}}.
$
Using the equivalent $\int_{x}^\infty e^{-t^2}dt \underset{x\rightarrow \infty}{\sim} \frac{e^{-x^2}}{2x}$, for any fixed $\Delta,\sigma>0$, the square bias term is asymptotically equivalent to
\begin{align}\label{eq:bias_equiv}
b^2_{1}(m^\star) \underset{n\rightarrow \infty}{\sim} \frac{1}{\pi \sqrt{\Delta} \sigma n\sqrt{\log(n)}}.
\end{align}
The associated variance term is $V(m^\star)= \frac{\sqrt{\log n}}{\sqrt{\Delta} \sigma n},$ the  bound is therefore variance-dominated. If $\Delta=\Delta_n\to 0$ as $n\rightarrow \infty$, \eqref{eq:bias_equiv} remains valid, since $\sqrt{\Delta_n} m^\star \underset{ n\rightarrow \infty}{\rightarrow} \infty,$ and the global error is still asymptotically variance-dominated since
$
b^2(m^\star) \underset{n\rightarrow \infty}{\sim} V(m^\star) (\pi \log n)^{-1}.
$

The case $(b,\sigma^2,\nu)\in\mathcal T_{0, M,\alpha}$ is treated in the same way. Differentiating $m\mapsto b_{2}^{2}(m)+V(m)$, allows to find that  $m^\star$ satisfying
$e^{-M\Delta({2m^\star}/{\pi})^\alpha}={n^{-1}}$ is the announced value. The risk is again asymptotically variance-dominated, and  is of order of ${m^\star}/{n}$.

\subsection{Proof of Proposition \ref{propBG}} \label{subsec:proofpropBG}
Recall the definitions
\begin{align*}
\alpha_0 &= \sup \left\{
\gamma \in [0, 2) : \varphi(\gamma) := \liminf\limits_{\eta \to 0} \eta^{\gamma-2} \int_{-\eta}^{\eta} x^2 \nu(dx) > 0
\right \},\\
\beta &= \inf \left \{
    \gamma \in [0, 2] : \psi(\gamma):= \int_{-1}^{1} |x|^\gamma \nu(dx) < \infty
\right \}.
\end{align*}
First, remark that the sets defining $\alpha_0$ and $\beta$ are intervals, i.e. any $\gamma < \alpha_0$ belongs to the set defining $\alpha_0$ and any $\gamma > \beta$ belongs to the set defining $\beta$.

$\alpha_0 \leq \beta$: This part of the proof does not make use of any assumption. Suppose by contradiction that $\beta < \alpha_0$ and choose $\beta < \gamma_1 < \gamma_2 < \alpha_0$, so that $\psi(\gamma_1) < \infty$ and $\varphi(\gamma_2) > 0$. But then:
$$
\varphi(\gamma_2) = \liminf_{\eta \to 0}\eta^{\gamma_2 -2} \int_{-\eta}^\eta x^2 \nu(dx) \leq \liminf_{\eta \to 0} \eta^{\gamma_2 - \gamma_1} \int_{-\eta}^\eta |x|^{\gamma_1} \nu(dx) = 0.
$$

$\alpha_0 \geq \beta$: Apply l'Hôpital's rule to the limit:
\begin{equation}\label{eqn:lim}
\lim_{\eta \to 0} \frac{\int_{-\eta}^\eta x^2 p(x) dx}{\eta^{2-\gamma}}.
\end{equation}
Since $\gamma < 2$, and $\int x^2 p(x) dx < \infty$, both the numerator and denominator tend to 0 as $\eta \to 0$. Additionally, they are both differentiable, and the derivative of the denominator does not vanish. Finally, by hypothesis, $ \lim_{\eta \to 0}  \eta^{1+\gamma} p(\eta)$ exists. Therefore, the limit in \eqref{eqn:lim} exists and equals $\varphi(\gamma) = \lim_{\eta \to 0} \frac{2}{2-\gamma} \eta^{1+\gamma} p(\eta)$.
Now, assume for contradiction that $\alpha_0 < \beta$. Then, there exist $\gamma_1$ and $\gamma_2$ such that $\alpha_0 < \gamma_1 < \gamma_2 < \beta$, meaning $\varphi(\gamma_1) = 0$, and $\psi(\gamma_2) = \infty$. Since $\varphi(\gamma_1) = 0$, for any $\varepsilon > 0$, there exists a $\delta > 0$ such that $|x|^{\gamma_1+1} p(x) < \varepsilon$ for all $x$ with $|x| < \delta$. On the other hand, since $p(x)$ is a Lévy density, the integral in $\psi$ can be restricted to an arbitrarily small neighborhood of 0, leading to:
$$
\infty = \int_{-\eta}^{\eta} |x|^{\gamma_2} p(x) dx = \int_{-\eta}^{\eta} \frac{|x|^{\gamma_2 - \gamma_1}}{|x|} |x|^{\gamma_1+1} p(x) dx < \varepsilon \int_{-\eta}^\eta |x|^{\gamma_2-\gamma_1-1} dx,
$$
which is a contradiction, since $\gamma_2 - \gamma_1 - 1 > -1$.

\subsection{Proof of Example \ref{ex:alphaBG}} \label{subsec:proofexample}
  Consider the following two Lévy densities:
$$
p_{even}(x) = \begin{cases}
    \frac{1}{x^2} & \text{if } x \in I_{even},\\
    \frac{1}{x^\frac{3}{2}} & \text{if } x \in I_{odd},
\end{cases} \quad
p_{odd}(x) = \begin{cases}
    \frac{1}{x^\frac{3}{2}} & \text{if } x \in I_{even},\\
    \frac{1}{x^2} & \text{if } x \in I_{odd}.
\end{cases}
$$
Then, since $p_{even} + p_{odd} > \frac{1}{x^2}$, we obtain:
$$
\int_{|x|<1} |x|^\gamma \frac{1}{x^2} \, dx \leq \int_{|x|<1} |x|^\gamma p_{even}(x) \, dx + \int_{|x|<1} |x|^\gamma p_{odd}(x) \, dx.
$$
This shows that, if $\gamma < 1$ (1 is the Blumenthal-Getoor index of $\frac{1}{x^2}$), then at least one (in fact, both) of the two integrals on the right must diverge, meaning that at least one between $p_{even}$ and $p_{odd}$ has $\beta \ge  1$. We will demonstrate, however, that $\alpha_0 \le \frac{1}{2}$. From now on, we focus on $p = p_{odd}$, but the proof for $p_{even}$ is analogous (using $\eta_{2k+1}$ instead of $\eta_{2k}$).
We claim that, for every $\gamma > \frac{1}{2}$, the following integral:
$$
a_k = \eta_{2k}^{\gamma-2} \int_0^{\eta_{2k}} x^2 p(x) \, dx
$$
tends to 0. Indeed, this can be written:
\begin{align*}
a_k &= \eta_{2k}^{\gamma - 2} \bigg ( \sum_{i \geq k} \int_{\eta_{2i+2}}^{\eta_{2i+1}} dx + \sum_{i \geq k} \int_{\eta_{2i+1}}^{\eta_{2i}} \sqrt{x} dx \bigg) \\
&=
 \eta_{2k}^{\gamma-2} \bigg (\sum_{i=k}^\infty \big(\eta_{2i+1} - \eta_{2i+2}\big) + \frac{2}{3}  \sum_{i=k}^\infty \big( \eta_{2i}^\frac{3}{2} - \eta_{2i+1}^\frac{3}{2}\big)\bigg)\\
& \leq \eta_{2k}^{\gamma-2} \bigg( \sum_{i=k}^\infty \eta_{2i+1} + \sum_{i=k}^\infty \eta_{2i}^\frac{3}{2}\bigg) =
 \eta_{2k}^\gamma 2^{2^{2k+1}}\sum_{i=k}^\infty 2^{-2^{2i+1}} + \eta_{2k}^\delta 2^{\frac{3}{2}2^{2k}} \sum_{i=k}^\infty \left(2^{-2^{2i}}\right)^{\frac32}
\end{align*}
where we have set $\gamma = \delta + \frac{1}{2}$. This leads to:
$$
a_k \leq  \eta_{2k}^\gamma \sum_{i=k}^\infty 2^{2^{2k+1} - 2^{2i+1}} + \eta_{2k}^\delta \sum_{i=k}^\infty \Big(2^{2^{2k} - 2^{2i}} \Big)^{\frac{3}{2}}.
$$
Let us introduce $h = i - k$, so that the exponents in the sum become $2^{2k+1} - 2^{2k+2h+1}$ and $2^{2k} - 2^{2k + 2h}$. Note that, for any positive integers $n, m > 0$, we have $2^{n} - 2^{n+m} \leq -2^{n+m-1}$. Applying this with $n = 2k+1$ (resp. $n = 2k$ in the second sum) and $m = 2h$ for $h > 0$, we get:
$$
a_k \leq \eta_{2k}^\gamma \left(1 + \sum_{h=1}^\infty 2^{-2^{2h+2k}}\right) + \eta_{2k}^\delta \left(1 + \sum_{h=1}^\infty 2^{-\frac{3}{2} 2^{2h+2k-1}}\right).
$$
The terms in the parentheses are finite number; therefore, for any $\gamma > \frac{1}{2}$, so that $\delta > 0$, $a_k$ tends to 0, as claimed. This implies that $\alpha_{0}\le \frac12$. 

\newpage

\subsection{Proof of Theorem \ref{thmLB} }
\subsubsection{Proof of Theorem \ref{thmLB} i)}
Proof of Theorem \ref{thmLB} i) uses standard techniques to derive lower bounds based on two hypotheses, as illustrated in Chapter 2 of \cite{tsy}. For density estimation problems from $n$ i.i.d. observations, the minimax risk 
$$\mathcal R_{n}:=\inf_{\hat f_{\Delta}}\sup_{f_{\Delta}\in  \mathcal F_+^\Delta}\E[\|\hat f_\Delta-f_\Delta\|^2]$$ 
is lower bounded by the risk associated with the choice of two specific parameters in the parameter space $\mathcal F_+^\Delta$. In our case, we choose two (sequences of) Lévy processes $X_1$ and $X_2$ with Lévy triplets given by $(0,1,0)$ and $(0,1+\frac{1}{\sqrt n},0)$, respectively. For any $\Delta>0$, we denote by $f_{1,\Delta}$ (resp. $P_{f_{1,\Delta}}$) and $f_{2,\Delta}$ (resp. $P_{f_{2,\Delta}}$) the densities of $X_1$ and $X_2$ at time $\Delta$ (resp. the law of $X_1$ and $X_2$ at time $\Delta$). In particular, both $f_{1,\Delta}$ and $f_{2,\Delta}$ belong to $\mathcal F_+^\Delta$. General arguments lead to the following lower bound:
$$\mathcal R_{n}\geq \frac{1}{4} \|f_{1,\Delta}- f_{2,\Delta}\|^2\Big\{1-\frac{1}{2}\|P_{f_{1,\Delta}}^{\otimes n}-P_{f_{2,\Delta}}^{\otimes n}\|_1\Big\}.$$
By using the fact that the $L^1$ distance is upper bounded by the Hellinger distance $H$, and that it is easy to control such a distance for product measures, this leads to
\begin{align*}
\mathcal R_{n}\geq \frac{1}{4} \big(1-\sqrt n H(P_{f_{1,\Delta}}, P_{f_{2,\Delta}})\big)\|f_{1,\Delta}- f_{2,\Delta}\|^2.
\end{align*}
Now, using the fact that $f_{1,\Delta}$ is the density of a Gaussian distribution $\mathcal N(0,\Delta)$, and $f_{2,\Delta}$ that of a Gaussian distribution $\mathcal N(0,\Delta(1+1/\sqrt n))$, and that $H^2(\mathcal N(0,\sigma_1^2), \mathcal N(0,\sigma_2^2))=1-\sqrt{\frac{2\sigma_1\sigma_2}{\sigma_1^2+\sigma_2^2}}$, we derive
\begin{align*}
\mathcal R_{n} \geq \frac{1}{4}\left(1-\sqrt n \sqrt{1-\sqrt{\frac{2\sqrt{1+n^{-1/2}}}{2+n^{-1/2}}}}  \right)\|f_{1,\Delta}- f_{2,\Delta}\|^2.
\end{align*}
First, for all $y>0$ and $\eps\in(0,1],$ it holds that $e^{y}-e^{y(1-\eps)}\ge \eps y$. From this, we can deduce (setting $y=\Delta u^{2}/2$ and $\eps=1/\sqrt{n}$) and using  Plancherel's equality, that  for any $n\geq 1$, $u\ge0$, we have 

\begin{align*}
\|f_{1,\Delta}- f_{2,\Delta}\|^2 &=\frac1\pi\int_{0}^{\infty}\big(e^{-\frac{\Delta u^2}{2}}-e^{-\frac{\Delta u^2 (1+n^{-1/2})}{2}}\big)^2du \\
&\geq \frac1\pi\int_{0}^{\infty}\frac{\Delta^2 u^4 e^{-2u^2\Delta}}{4n}du= \frac{3}{2^{7}\sqrt {2\pi} n\sqrt\Delta }.
\end{align*}

Let $h(n):=n \Big(1-\sqrt{\frac{2\sqrt{1+n^{-1/2}}}{2+n^{-1/2}}}\Big)$, $\forall n\in\N^*$. Since $h$ is an increasing function and 
$\lim_{n\to\infty} h(n)=\frac{1}{16}$, we conclude that $\sqrt n \sqrt{1-\sqrt{\frac{2\sqrt{1+n^{-1/2}}}{2+n^{-1/2}}}}\leq \frac 1 4$, and therefore $\mathcal R_{n}\geq \frac{C}{n\sqrt \Delta}$ with $C=\frac{9}{2048\sqrt{2\pi}}.$ This completes the proof of Theorem \ref{thmLB} i).

\subsubsection{Proof of Theorem \ref{thmLB} ii)}

The lower bound is established by showing that the announced rate is attained on the class of symmetric $\alpha_0$-stable processes, for $\alpha_0\in[\alpha,2)$. The proof is divided in four main steps.

\emph{Step 1:} First observe that 
$$ \inf_{\hat f_{\Delta}}\sup_{f_{\Delta} \in {\mathcal F}_{0,M,\alpha}^{\Delta} }\E\|\hat f_{\Delta}-f_{\Delta}\|^{2} \ge \inf_{\hat f_{\Delta}}\sup_{f_{\Delta}\in {\mathcal S}_{\alpha}}\E\|\hat f_{\Delta}-f_{\Delta}\|^{2},$$
where $\mathcal S_{\alpha}$ denotes the class of symmetric $\alpha_0$-stable densities with characteristic function $e^{-\Delta |u|^{\alpha_0}}$, $\alpha_0\in[\alpha,2)$.
In particular, if $f_\Delta$ is the density of $X_\Delta$ with $\E[e^{iuX_\Delta}]=e^{-\Delta|u|^{\alpha_{0}}}$, then the L\'evy density of $X$ is given by $p_{\alpha_0}(x) = \frac{P_{\alpha_0}}{|x|^{1+\alpha_0}}\ind_{x\neq 0}$, where $$P_{\alpha_0}:=\left(2\cos\left(\frac{\pi\alpha_{0}}{2} \right) \alpha_0^{-1} \Gamma(1-\alpha_0)\right)^{-1}\1_{\alpha_0\neq 1}+\frac1\pi \1_{\alpha_0=1},$$ see e.g. \cite{belomestny2015levy} p.215. Therefore, for all $\eta \in (0,1)$, one has 
$ \int_{[-\eta,\eta]} x^2 p_{\alpha_0}(x) dx =\frac{ 2 P_{\alpha_0}}{2-\alpha_0} \eta^{2-\alpha_0}$ where  $\alpha_0\mapsto \frac{ 2 P_{\alpha_0}}{2-\alpha_0}$ is increasing in $[\alpha,2)$. It follows that, for any $\eta\in(0,1)$, $ \int_{[-\eta,\eta]} x^2 p_{\alpha_0}(x) dx\geq \frac{2P_\alpha}{2-\alpha}\eta^{2-\alpha}\geq M \eta^{2-\alpha}$, with $M$ as in  \eqref{eq:M}, and $f_\Delta$ satisfies Assumption \eqref{Ass:p}. Thus, any $f_\Delta\in \mathcal S_{\alpha}$ is an element of ${\mathcal F}_{0,M,\alpha}^{\Delta}$ if $M\leq  \frac{2P_\alpha}{2-\alpha}.$

\emph{Step 2:} Denote by
\begin{align*}
\mathcal R_{\alpha} & := \bigg\{\phi_\Delta: \mathbb{R} \to \mathbb{R} \ : \ \exists \beta \in [\alpha,2) \ \text{such that } \phi_\Delta(t) = e^{-\Delta |t|^\beta}, \forall t \in \mathbb{R} \bigg\}, \\
\tilde{\mathcal R}_{\alpha} & := \bigg\{\phi_\Delta : \exists \beta \in \Big[\alpha, \alpha-\frac{2}{\log(\Delta)}\Big] \subset (0,2) \ \text{such that } \phi_\Delta(t) = e^{-\Delta |t|^\beta}, \forall t \in \mathbb{R} \bigg\},
\end{align*}
and observe that $\tilde {\mathcal R}_{\alpha} \subseteq {\mathcal R}_{\alpha}$, given that $\Delta < e^{-\frac{2}{2-\alpha}}$. Thus, by applying the Plancherel identity, we can express, for $\phi_{\alpha_0,\Delta}(t) := e^{-\Delta |t|^{\alpha_0}}, \ t \in \mathbb{R}$, the following:
\begin{align*}
\inf_{\hat f_{\Delta}} \sup_{f_{\Delta} \in \mathcal S_{\alpha}} \mathbb{E} \|\hat f_{\Delta} - f_{\Delta}\|^{2} 
& = \frac{1}{2\pi} \inf_{\hat \phi_{\Delta}} \sup_{\phi_{\Delta} \in \mathcal{R}_{\alpha}} \mathbb{E} \|\hat \phi_{\Delta} - \phi_{\Delta}\|^{2} \\
& \geq \frac{1}{2\pi} \inf_{\hat \phi_{\Delta}} \sup_{\phi_{\alpha_0,\Delta} \in \tilde{\mathcal{R}}_{\alpha}} \mathbb{E} \|\hat \phi_{\Delta} - \phi_{\alpha_0,\Delta}\|^{2}.
\end{align*}

Next, we demonstrate that the estimation of $\phi_{\alpha_0,\Delta}$ can be reduced to focusing on the parameter $\alpha_0$. For a given estimator $\hat \phi_{\Delta}$, define the quantity
$$
\hat\alpha(\hat\phi_{\Delta}) \in \arg \min_{\alpha_0 \in [\alpha, \alpha + \frac{2}{\log(1/\Delta)}]} \mathbb{E} \|\hat \phi_{\Delta} - \phi_{\alpha_0,\Delta}\|^{2}.
$$
It follows that $\phi_{\hat\alpha(\hat\phi_{\Delta}),\Delta}$ is the characteristic function of the symmetric stable process in $\tilde{\mathcal R}_{\alpha}$ that is closest to the estimator $\hat \phi_{\Delta}$. From this, we can deduce that for all $\alpha_0 \in [\alpha, \alpha + \frac{2}{\log(1/\Delta)}]$ and for any estimator $\hat \phi_{\Delta}$, the following holds:
\begin{align*}
\mathbb{E} \|\phi_{\hat\alpha(\hat\phi_{\Delta}),\Delta} - \phi_{\alpha_0,\Delta}\|^{2} 
& \leq 2 \mathbb{E} \|\phi_{\hat\alpha(\hat\phi_{\Delta}),\Delta} - \hat\phi_{\Delta}\|^{2} + 2 \mathbb{E} \|\hat \phi_{\Delta} - \phi_{\alpha_0,\Delta}\|^{2} \\
& \leq 4 \mathbb{E} \|\hat\phi_{\Delta} - \phi_{\alpha_0,\Delta}\|^{2}.
\end{align*}

Substituting this into the previous inequality, we obtain
\begin{align*}
\inf_{\hat f_{\Delta}} \sup_{f_{\Delta} \in \mathcal S_{\alpha}} \mathbb{E} \|\hat f_{\Delta} - f_{\Delta}\|^{2} 
& \geq \frac{1}{8\pi} \inf_{\hat \alpha \in [0,2]} \sup_{\alpha_0 \in [\alpha, \alpha + \frac{2}{\log(1/\Delta)}]} \mathbb{E} \|\phi_{\hat\alpha,\Delta} - \phi_{\alpha_0,\Delta}\|^{2}.
\end{align*}

\emph{Step 3:} In this step, we control the quantity $\E\|\phi_{\hat\alpha,\Delta}-\phi_{\alpha_0,\Delta}\|^2$. 
For that we introduce the event $$A_{\hat\alpha,\alpha}=\Big\{\omega\in\Omega:  \log (1/\Delta) |\hat \alpha(\omega)-\alpha|\leq 2\Big\}$$ and the quantities $$m_{\alpha_0}:=\min(\alpha_0,\hat \alpha)\ge 0, \quad M_{\alpha_0}:=\max(\alpha_0,\hat \alpha)\le 2.$$ The step is divided in three points, where we control $\E\|\phi_{\hat\alpha,\Delta}-\phi_{\alpha_0,\Delta}\|^2$ respectively on $A_{\hat\alpha,\alpha}$ and $A_{\hat\alpha,\alpha}^{c}$ before concluding.

\emph{Step 3.1:} We first restrict to the event  $A_{\hat\alpha,\alpha}$ and $\alpha_0\in [\alpha,\alpha+\frac{2}{\log(1/\Delta)}]$. By Lagrange theorem applied to the function $h(x)=e^{-\Delta u^x}$, for any $\omega \in A_{\hat\alpha,\alpha}$
 there exists $\xi\in[m_{\alpha_0}(\omega), M_{\alpha_0}(\omega)]$ such that
\begin{align*}
\Big(e^{-\Delta u^{m_{\alpha_0}(\omega)}}-e^{-\Delta u^{M_{\alpha_0}(\omega)}}\Big)^2&=|\hat \alpha(\omega)-\alpha_0|^2 \Delta^2 e^{-2\Delta u^\xi} u^{2\xi} \log^2(u)\\
&\geq \frac{|\hat \alpha(\omega)-\alpha_0|^2 \Delta^2}{M_{\alpha_0}^2(\omega)} \log^2(\Delta) e^{-2\Delta u^{M_{\alpha_0}(\omega)}} u^{2m_{\alpha_0}(\omega)},
\end{align*}
for all $u\geq \Delta^{-\frac{1}{M_{\alpha_0}(\omega)}}$. Using that $M_{\alpha_0}\leq 2$, $m_{\alpha_0}\geq 0$ and $\frac{2m_{\alpha_0}+1}{M_{\alpha_0}}\leq \frac{2{\alpha_0}+1}{{\alpha_0}}$ we derive

\begin{align}
\E\big[\|\phi_{\hat\alpha,\Delta}-&\phi_{\alpha_0,\Delta}\|^2\mathbf{1}_{A_{\hat\alpha,\alpha}}\big]= 2\int_0^\infty \E\bigg[\mathbf{1}_{A_{\hat\alpha,\alpha}} \Big(e^{-\Delta u^{m_{\alpha_0}}}-e^{-\Delta u^{M_{\alpha_0}}}\Big)^2\bigg]du\nonumber\\
&\geq 2\E\bigg[\mathbf{1}_{A_{\hat\alpha,\alpha}}\int_\frac{1}{ \Delta^{1/{M_{\alpha_0}}}}^\infty  \frac{|\hat \alpha-\alpha_0|^2 \Delta^2}{4} \log^2(\Delta) e^{-2\Delta u^{M_{\alpha_0}}} u^{2m_{\alpha_0}}du\bigg]\nonumber\\
&\geq \frac{1}{2} \log^2(\Delta) \E\bigg[\mathbf{1}_{A_{\hat\alpha,\alpha}}|\hat \alpha-\alpha_0|^2 \Delta^2 \frac{\Gamma\Big(\frac{2m_{\alpha_0}+1}{M_{\alpha_0}},2\Big)}{M_{\alpha_0}}\big(2\Delta\big)^{-\frac{2m_\alpha+1}{M_\alpha}} \bigg]\nonumber\\
&\geq \frac{2^{-\frac{2\alpha_0+1}{\alpha_0}}}{4}\log^2(\Delta)\E\bigg[\mathbf{1}_{A_{\hat\alpha,\alpha}}|\hat \alpha-\alpha_0|^2 \Delta^{\frac{2(M_{\alpha_0}-m_{\alpha_0})-1}{M_{\alpha_0}}} \Gamma\Big(\frac{2m_{\alpha_0}+1}{M_{\alpha_0}},2\Big) \bigg]\nonumber\\
 &\geq  \frac{2^{-\frac{2{\alpha_0}+1}{\alpha_0}}\int_2^\infty \frac{e^{-t}}{\sqrt t}dt}{4}\log^2(\Delta)\E\bigg[\mathbf{1}_{A_{\hat\alpha,\alpha}}|\hat \alpha-\alpha_0|^2 \Delta^{\frac{2(M_{\alpha_0}-m_{\alpha_0})-1}{M_{\alpha_0}}} \bigg]\label{eq:LB1}.
\end{align}
We write $\Delta^{\frac{2(M_{\alpha_0}-m_{\alpha_0})-1}{M_{\alpha_0}}}=\Delta^{\frac{2(M_{\alpha_0}-m_{\alpha_0})}{M_{\alpha_0}}}\Delta^{-\frac{1}{\alpha}+\frac{M_{\alpha_0}(\omega)-\alpha}{\alpha M_{\alpha_0}(\omega)}}$ and we notice that for any $\omega\in A_{\hat\alpha,\alpha}$ and $\alpha_0\in [\alpha,\alpha+\frac{2}{\log(1/\Delta)}]$, it holds:
\begin{align*}
\frac{M_{\alpha_0}(\omega)-\alpha}{\alpha_0M_{\alpha_0}(\omega)}&\leq \frac{|\hat \alpha(\omega)-\alpha|+|\alpha-\alpha|}{\alpha\alpha_0}\leq \frac{4}{\alpha^2\log(1/\Delta)},\\
\frac{ M_{\alpha_0}(\omega)-m_{\alpha_0}(\omega)}{M_{\alpha_0}(\omega)}&=\frac{|\hat \alpha(\omega)-\alpha_0|}{M_{\alpha_0}(\omega)}\leq \frac{4}{\alpha\log(1/\Delta)}.
\end{align*}
Hence, from the previous inequalities, we derive
\begin{align}\label{eqLBd}
\Delta^{\frac{2(M_{\alpha_0}-m_{\alpha_0})-1}{M_{\alpha_0}}}\geq  \Delta^{-\frac{1}{\alpha}}  \Delta^{\frac{4}{\alpha\log(1/\Delta)}\big(2+\frac{1}{\alpha}\big)}= \Delta^{-\frac{1}{\alpha}} e^{-\frac{4}{\alpha}\big(2+\frac{1}{\alpha_0}\big)}.
\end{align}
Moreover, we observe that  
\begin{align}\label{sup}
\sup_{\alpha_0\in[\alpha,\alpha+ \frac{2}{\log(1/\Delta)}]} 2^{-\frac{2\alpha_0+1}{\alpha}}\geq 2^{-\big(\frac{1}{\alpha}+2\big)}.
\end{align}
Injecting \eqref{eqLBd} and \eqref{sup} in \eqref{eq:LB1} we deduce that
\begin{align*}
\sup_{\alpha_0\in [\alpha,\alpha+\frac{2}{\log(1/\Delta)}]} \hspace{-0,3cm}\E\big[\|\phi_{\hat\alpha,\Delta}-\phi_{\alpha_0,\Delta}\|^2\mathbf{1}_{A_{\hat\alpha,\alpha}}\big]\geq \frac{C_0}{n\Delta^{\frac{1}{\alpha}}}\sup_{\alpha_0\in [\alpha,\alpha+\frac{2}{\log(1/\Delta)}]} \hspace{-0,3cm}\E\big[\mathbf{1}_{A_{\hat\alpha,\alpha}}n \log^2(\Delta)|\hat \alpha-\alpha_0|^2\Big],
\end{align*}
where $C_0$ is a positive constant only depending on $\alpha$ defined as follows:
\begin{align}\label{C0}
C_{0}=\frac{2^{-\frac{1}{\alpha}} e^{-\frac{4}{\alpha}\big(2+\frac{1}{\alpha}\big)}\int_2^\infty \frac{e^{-t}}{\sqrt t}dt}{16}.
\end{align}

\emph{Step 3.2:} We now restrict to the event  $A^{c}_{\hat\alpha,\alpha}$ and $\alpha_0\in [\alpha,\alpha+\frac{2}{\log(1/\Delta)}]$.
Let $\omega\in A^c_{\hat \alpha,\alpha}$ and $u\geq \Delta^{-\frac{1}{m_{\alpha_0}(\omega)}}$, we justify that $e^{-\Delta u^{m_{\alpha_0}(\omega)}}-e^{-\Delta u^{M_{\alpha_0}(\omega)}}\geq \frac{e^{-\Delta u^{m_{\alpha_0}(\omega)}}}{2}$. Indeed, using that $m_{\alpha_0}\le2 $ we  write
\begin{align*}
u^{M_{\alpha_0}(\omega)}-u^{m_{\alpha_0}(\omega)}&=u^{m_{\alpha_0}(\omega)}\Big(u^{|\hat \alpha(\omega)-\alpha_0|}-1\Big)\geq \frac{1}{\Delta}\bigg(\Delta^{\frac{2}{m_{\alpha_0}(\omega)\log(\Delta)}}-1\bigg)\\
&=\frac{e^{\frac{2}{m_{\alpha_0}(\omega)}}-1}{\Delta}\ge \frac{2}{\Delta m_{\alpha_0}(\omega)}\geq  \frac{1}{\Delta}\geq \frac{\log 2}{\Delta}, 
\end{align*}
which implies that $e^{\Delta(u^{M_{\alpha_0}(\omega)}-u^{m_{\alpha_0}(\omega)})}\geq 2$ and the desired inequality. As a consequence, we get 
\begin{align}\label{c} 
 \E\big[\|\phi_{\hat\alpha,\Delta}-\phi_{\alpha_0,\Delta}\|^2\mathbf{1}_{A^c_{\hat \alpha,\alpha}}\big]&\geq \frac{1}{2}\E\bigg[\mathbf{1}_{A^c_{\hat \alpha,\alpha}}\int_{\Delta^{-1/m_{\alpha_0}}}^{2\Delta^{-1/m_{\alpha_0}}}e^{-2\Delta u^{m_{\alpha_0}}}du\bigg]\nonumber\\
 &\geq \frac{1}{2}\E\bigg[\mathbf{1}_{A^c_{\hat \alpha,\alpha}}\Delta^{-\frac1{m_{\alpha_0}}}e^{-2^{1+m_{\alpha_0}}}\bigg]\geq \frac{1}{2}\E\bigg[\mathbf{1}_{A^c_{\hat \alpha,\alpha}}\Delta^{-\frac1{\alpha}}\Delta^{\frac{m_{\alpha_0}-\alpha}{\alpha m_{\alpha_0}}}e^{-8}\bigg].
\end{align} 
In order to control the term $\Delta^{\frac{m_{\alpha_0}-\alpha}{\alpha m_{\alpha_0}}}$ we notice that for $\alpha_0\in [\alpha,\alpha-\frac{2}{\log(\Delta)}]$ and $\omega\in A^c_{\hat \alpha,\alpha}$ it holds
$$
\Delta^\frac{m_{\alpha_0}-\alpha}{\alpha m_{\alpha_0}}\geq \begin{cases}
1 &\text{ if }\quad  m_{\alpha_0}(\omega)-\alpha\leq 0,\\
\Delta^{\frac{2}{\log(1/\Delta)\alpha^2}}=e^{-\frac{2}{\alpha^2}}&\text{ otherwise},
\end{cases}
$$
that is
\begin{align}\label{ma}
\Delta^\frac{m_{\alpha_0}-\alpha}{\alpha m_{\alpha_0}}\geq e^{-\frac{2}{\alpha^2}}.
\end{align}
Injecting \eqref{ma} in \eqref{c} we derive
$$\sup_{\alpha_0\in [\alpha,\alpha+\frac{2}{\log(1/\Delta)}]} \E\big[\|\phi_{\hat\alpha,\Delta}-\phi_{\alpha_0,\Delta}\|^2\mathbf{1}_{A^c_{\hat \alpha,\alpha}}\big]\geq \frac{e^{-8-\frac{2}{\alpha^2}}}{2}\Delta^{-1/\alpha}\E\big[\mathbf{1}_{A^c_{\hat \alpha,\alpha}}\big].$$

\emph{Step 3.3:} This step aims to show that
\begin{align}\label{eq:inf}
\inf_{\hat \alpha\in[0,2]}\sup_{\alpha_0\in [\alpha,\alpha+\frac{2}{\log(1/\Delta)}]} \hspace{-0.3cm}&\E\big[\|\phi_{\hat\alpha,\Delta}-\phi_{\alpha_0,\Delta}\|^2\big] \\
&\geq 
\inf_{\hat \alpha\in[0,2]}\sup_{\alpha_0\in [\alpha,\alpha+\frac{2}{\log(1/\Delta)}]}\frac{K_0}{n\Delta^{1/\alpha}} \E[n \log^2(\Delta)|\hat \alpha-\alpha_0|^2], \nonumber
\end{align}
for some $K_0>0$ which depends only on $\alpha$.
Denote by
$$h(\hat\alpha,\alpha):= \sup_{\alpha_0\in [\alpha,\alpha+\frac{2}{\log(1/\Delta)}]} \E\Big[\mathbf{1}_{A_{\hat \alpha,\alpha}}n \log^2(\Delta)|\hat \alpha-\alpha_0|^2+n\mathbf{1}_{A^c_{\hat \alpha,\alpha}}\Big].$$
Steps 3.1 and 3.2 allows to write that
\begin{align*}
\inf_{\hat \alpha\in[0,2]}\sup_{\alpha_0\in [\alpha,\alpha+\frac{2}{\log(1/\Delta)}]} \E\big[\|\phi_{\hat\alpha,\Delta}-\phi_{\alpha_0,\Delta}\|^2\big]&\geq \frac{K_0}{n\Delta^{1/\alpha}} \inf_{\hat \alpha} h(\hat\alpha,\alpha),
\end{align*}
for $K_0=\min\big(\frac{e^{-8-{2}/{\alpha^2}}}{2}, C_0\big)$, with $C_0$ defined  in \eqref{C0}.
To show that \eqref{eq:inf} holds true, we show that 
$\inf_{\hat \alpha}h(\hat\alpha,\alpha)=\inf_{\hat \alpha: A_{\hat \alpha,\alpha}=\Omega}h(\hat\alpha,\alpha).$
Clearly, $\inf_{\hat \alpha}h(\hat\alpha,\alpha)\leq\inf_{\hat \alpha: A_{\hat \alpha,\alpha}=\Omega}h(\hat\alpha,\alpha)$. 
To prove the other inequality, we only need to prove that, given an arbitrary estimator $\hat \alpha$ of $\alpha_0$, we can construct $\check \alpha$ such that $A_{\check \alpha,\alpha}=\Omega$ and $h(\check \alpha,\alpha) \leq h(\hat \alpha,\alpha)$. Define:
$$\check \alpha(\omega):=
\begin{cases}
\hat \alpha(\omega),\quad &\text{if } \omega\in A_{\hat\alpha,\alpha},\\
\alpha-\frac{1}{\log(\Delta)},\quad &\text{if } \omega\in A^c_{\hat\alpha,\alpha}.
\end{cases} 
$$ It follows that
$$h(\check \alpha,\alpha)=\sup_{\alpha_0\in [\alpha,\alpha+\frac{2}{\log(1/\Delta)}]}\E[n\log^2(\Delta)|\check \alpha-\alpha_0|^2],$$
 by definition of $\check \alpha$, $A_{\check \alpha,\alpha}=\Omega$. On the other hand, $h(\check \alpha,\alpha)\leq h(\hat \alpha,\alpha)$ as if $\omega\in A_{\hat \alpha,\alpha}$ then $\hat \alpha(\omega)=\check \alpha(\omega)$ and if $\omega\in A^c_{\hat \alpha, \alpha}$, then 
\begin{align*}
h(\check \alpha,\alpha)&=\sup_{\alpha_0\in[\alpha,\alpha+\frac{2}{\log(1/\Delta)}]}n\log^2(\Delta)\Big|\alpha-\frac{1}{\log(\Delta)}-\alpha_0\Big|^2\leq n=h(\hat \alpha,\alpha).
\end{align*}

\emph{
Step 4:} The last step consists in proving that 
\begin{align}\label{fineq}
\inf_{\hat \alpha\in[0,2]}\sup_{\alpha_0\in[\alpha,\alpha+\frac{2}{\log(1/\Delta)}]}\frac{1}{n\Delta^{1/\alpha}} \E[n \log^2(\Delta)|\hat \alpha-\alpha_0|^2]\geq \frac{c}{n\Delta^{1/\alpha}},
\end{align}
for some positive constant $c$ and for $n\log^2(\Delta)$ large enough.
To that aim, we use the general theory on minimax bounds for parametric estimators under the LAN condition developed in Ibragimov and Has'minskii  \cite{ibragimov2013statistical}. Indeed, Masuda establishes in
\cite{belomestny2015levy} Theorem 3.2 p.218 that the family of symmetric $\alpha$-stable processes satisfy the LAN condition with a Fisher information given by $r_{n,\Delta,\alpha}:=I_{\alpha} n\log^{2}(1/\Delta),$ where $I_{\alpha}$ is a finite positive constant only depending on the parameter $\alpha\in (0,2)$ (see Equations (3.5) and (3.6) therein). 
Equation \eqref{fineq} is a consequence of Theorem 12.1 p.162 in \cite{ibragimov2013statistical}, which ensures that for $n\log^2(\Delta)$ large enough, for any $\delta>0$
$$ \inf_{\hat \alpha\in[0,2]} \sup_{|\alpha-\alpha_0|\leq \delta} \E[r_{n,\Delta,\alpha}|\hat\alpha-\alpha_0|^2]\geq \tilde c>0,$$
for some positive constant $\tilde c$. In our case, the interval over which the supremum is taken is not symmetric but rather of the form $[\alpha, \alpha-\frac{2}{\log(\Delta)}]$. Adapting the proof of the Theorem 12.1, the same conclusion holds true, for a different constant. The ingredient that ensures the validity of the proof is the fact that $\frac{ r_{n,\Delta,\alpha}}{\delta^{2}}=I_{\alpha} n\to \infty$ as $n\to\infty$.
Finally, observing that for $\Delta\leq e^{-\frac{4}{2-\alpha}}$ we have $[\alpha, \alpha-\frac{2}{\log(\Delta)}] \subseteq [\alpha,\frac{\alpha}{2}+1]$, we derive 
\begin{align}\label{dirf}
&\inf_{\hat \alpha\in[0,2]}\sup_{\alpha_0\in [\alpha, \alpha-\frac{2}{\log(\Delta)}]}\frac{\E[n \log^2(\Delta)|\hat \alpha-\alpha_0|^2]}{n\Delta^{1/\alpha}} \nonumber\\
&\quad \geq \inf_{\alpha_0\in [\alpha,\frac{\alpha}{2}+1]}\frac{1}{I_{\alpha_0}}\inf_{\hat \alpha\in[0,2]} \sup_{\alpha_0\in [\alpha, \alpha-\frac{2}{\log(\Delta)}]} \E[r_{n,\Delta,\alpha}|\hat\alpha-\alpha_0|^2]\ge  \inf_{\alpha_0\in [\alpha,\frac{\alpha}{2}+1]}\frac{1}{I_{\alpha_0}}\tilde c.
\end{align}
Recalling that $I_{\alpha_0}$ is finite for any $\alpha_0\in(0,2)$, hence it is bounded on $[\alpha,\frac{\alpha}{2}+1]$, we derive \eqref{fineq} as desired. The proof is now complete.

\subsection{Proof of Theorem \ref{thm:adapt}}
Let $m\in(0,n]$ and note that 
\begin{align*}
{\mathbb E}[\|\tilde{f}_\Delta-f_\Delta\|^{2}] &\le \|f_{\Delta, m}-f_\Delta\|^{2}+{\mathbb E}[\|\tilde{f}_\Delta-f_{\Delta, m}\|^{2}]\\
&= \frac1{2\pi}\int_{[-m,m]^{c}}| \phi_{X_{\Delta}}(u)|^{2}{ d}u+{\mathbb E}[\|\tilde f_\Delta-f_{\Delta,m}\|^{2}].
\end{align*}
The first term is a bias term, we compute the second variance term using  Parseval's equality:\begin{align}2\pi{\mathbb E}[\|\tilde f_\Delta-f_{\Delta,m}\|^{2}]&=\int_{[-m,m]}{\mathbb E}\big[|\tilde \phi_{X_{\Delta}}(u)-\phi_{X_{\Delta}}(u)|^{2}\big]{ d}u +\int_{[-n,n]\setminus[-m,m]}|\tilde \phi_{X_{\Delta}}(u)|^{2}du.\label{eq:step1bis}\end{align} 
The first term is a variance term which, using the definition of $\tilde \phi_{X_{\Delta}}$, is controlled as follows:
\begin{align}
{\mathbb E}\big[|\tilde \phi_{X_{\Delta}}(u)-\phi_{X_{\Delta}}(u)|^{2}\big]&\le
\label{eq:var1}
{\mathbb E}\big[|\hat \phi_{X_{\Delta}}(u)-\phi_{X_{\Delta}}(u)|^{2}\big]+|\phi_{X_{\Delta}}(u)|^{2}{\mathbb P}\Big(|\hat\phi_{X_{\Delta}}(u)|< \frac{\kappa_{n}}{\sqrt{n}}\Big).
\end{align}
The first term in the right hand side of \eqref{eq:var1} is bounded by ${1}/{n}$.
For the second term, as $\kappa_{n}=1+\kappa\sqrt{\log n}$, consider the set $\{u,|\phi_{X_{\Delta}}(u)|< \frac{1+(\kappa+2)\sqrt{\log n}}{\sqrt{n}}\}$ and its complementary set where we can write from the triangle inequality that 
\begin{align*}
\bigg\{u,\ |\hat\phi_{X_{\Delta}}(u)|< \frac{1+\kappa\sqrt{\log n}}{\sqrt{n}}, \ |&\phi_{X_{\Delta}}(u)|> \frac{1+(\kappa+2)\sqrt{\log n}}{\sqrt{n}}\bigg\}\\
& \subset \left\{u,\ |\hat\phi_{X_{\Delta}}(u)-\phi_{X_{\Delta}}(u)|\geq \frac{2\sqrt{\log n}}{\sqrt{n}}\right\}.
\end{align*}
This leads to
\begin{align}\label{eq:var2}
\hspace{-0.5cm}|\phi_{X_{\Delta}}(u)|^{2}{\mathbb P}\Big(|\hat\phi_{X_{\Delta}}(u)|< \frac{\kappa_{n}}{\sqrt{n}}\Big)&
\leq \frac{(1+(\kappa+2)\sqrt{\log n})^{2}}{n} \nonumber \\
&\phantom{=} +{\mathbb P}\Big(|\hat\phi_{X_{\Delta}}(u)-\phi_{X_{\Delta}}(u)|\geq \frac{2\sqrt{\log n}}{\sqrt{n}}\Big) \nonumber \\
&\leq  \frac{(1+(\kappa+2)\sqrt{\log n})^{2}}{n}+\frac 4 n \leq  \frac{4+(1+(\kappa+2)\sqrt{\log n})^{2}}{n}, 
\end{align} where we used  the Hoeffding inequality on the bounded complex variables $e^{iu(X_{j\Delta}-X_{(j-1)\Delta})}$, see Lemma in \cite{ammous2024adaptive} with $b=2$. It follows that 
\[\int_{[-m,m]}{\mathbb E}\big[|\tilde \phi_{X_{\Delta}}(u)-\phi_{X_{\Delta}}(u)|^{2}\big]{ d}u \le 2m \frac{5+(1+(\kappa+2)\sqrt{\log n})^{2}}{n}.\]
Now, consider the second term in \eqref{eq:step1bis}. Use that
$
|\tilde\phi_{X_{\Delta}}|^2   \leq 2 |{\phi}_{X_{\Delta}}|^2
+ 2 | \tilde{\phi}_{X_{\Delta}}-\phi_{X_{\Delta}}|^2 
$ a.s.,  to write
\begin{align*}
\int_{[-n,n]\setminus [-m,m]}\hspace{-0.6cm}{\mathbb E}&\big[|\tilde \phi_{X_{\Delta}}(u)|^{2}\big]{ d}u \\
& \le 2\int_{[-n,n]\setminus [-m,m]}\hspace{-0.6cm}| \phi_{X_{\Delta}}(u)|^{2}{ d}u+2\int_{[-n,n]\setminus [-m,m]}\hspace{-0.6cm}{\mathbb E}\big[|\tilde \phi_{X_{\Delta}}(u)-\phi_{X_{\Delta}}(u)|^{2}\big]{ d}u\\
& \le 2\int_{[-m,m]^{c}}| \phi_{X_{\Delta}}(u)|^{2}{d}u+2\int_{[-n,n]\setminus [-m,m]}{\mathbb E}\big[|\tilde \phi_{X_{\Delta}}(u)-\phi_{X_{\Delta}}(u)|^{2}\big]{ d}u.
\end{align*} The first term is a bias term. Consider the set
 $\Phi:=\{u, |\phi_{X_{\Delta}}(u)|>n^{-1/2}\}$ to handle the second term. For any $u\in\Phi$, we recover a bias term as it holds: \begin{align*}
 {\mathbb E}[|\tilde{\phi}_{X_{\Delta}}(u)- \phi_{X_{\Delta}}(u)|^2]&\leq|\phi_{X_{\Delta}}(u)|^{2}+ {\mathbb E}[|\hat{\phi}_{X_{\Delta}}(u)- \phi_{X_{\Delta}}(u)|^2]\\
 &\leq|\phi_{X_{\Delta}}(u)|^{2}+\frac1n\leq 2|\phi_{X_{\Delta}}(u)|^{2}.
 \end{align*}
On the complementary $ \Phi^{c}$, we use
\begin{align*}
{\mathbb E}  \Big[   |\tilde{\phi}_{X_{\Delta}}(u)- &\phi_{X_{\Delta}}(u)|^2 \mathbf{1}_{\{|\phi_X(u)| \leq  n^{-1/2}\}}\Big] \\
&= {\mathbb E}  \Big[   | {\phi}_{X_{\Delta}}(u)|^2 \mathbf{1}_{\{|{\phi}_{X_{\Delta}}(u)| \leq  n^{-1/2}, |\hat{\phi}_{X_{\Delta}}(u)|<\kappa_nn^{-1/2}\}}\Big]\\
&\phantom{=}+{\mathbb E}  \Big[   |\hat{\phi}_{X_{\Delta}}(u)-{\phi}_{X_{\Delta}}(u)|^2 \mathbf{1}_{\{|{\phi}_{X_{\Delta}}(u)| \leq  n^{-1/2},  |\hat{\phi}_{X_{\Delta}}(u)|\ge\kappa_nn^{-1/2}\}}\Big]\\
&\le  | {\phi}_{X_{\Delta}}(u)|^2+ 4\mathbb P\big(|{\phi}_{X_{\Delta}}(u)| \leq  n^{-1/2},  |\hat{\phi}_{X_{\Delta}}(u)|\ge\kappa_nn^{-1/2}\big),
\end{align*}
where we used that $|\hat{\phi}_{X_{\Delta}}(u)- {\phi}_{X_{\Delta}}(u)|^2\leq 4$. Next, using that $\kappa_{n}=1+\kappa\sqrt{\log n}$, we get $$\{u,\ |{\phi}_{X_{\Delta}}(u)| \leq  n^{-1/2}\le\kappa_{n}n^{-1/2}\le  |\hat{\phi}_{X_{\Delta}}(u)| \}\subset \{u,\ \kappa\sqrt{\log n}n^{-1/2}\le   |\hat{\phi}_{X_{\Delta}}(u) - {\phi}_{X_{\Delta}}(u)|\}.$$
 It follows that
\begin{align*}
{\mathbb E}  \Big[   |\tilde{\phi}_{X_{\Delta}}(u)- &{\phi}_{X_{\Delta}}(u)|^2 \mathbf{1}_{\{|{\phi}_{X_{\Delta}}(u)| \leq  n^{-1/2}\}}\Big]\\
&\leq  | {\phi}_{X_{\Delta}}(u)|^2+ 4\mathbb P\big(|\hat{\phi}_{X_{\Delta}}(u)-{\phi}_{X_{\Delta}}(u)|\ge\kappa(\log n/n)^{1/2}\big).
\end{align*}
 This allows to write that
  \begin{align*}
&{\mathbb E}  \Big[ \int_{[-n,n]\setminus [-m,m]} \limits\hspace{-0.5cm}  |\tilde{\phi}_{X_{\Delta}}(u)- \phi_{X_{\Delta}}(u)|^2 \mathbf{1}_{\{|\phi_{X_{\Delta}}(u)| \leq  n^{-1/2}\}}{d}u\Big] \nonumber\\
&\leq \int_{[-m,m]^{c} }\limits    |\phi_{X_{\Delta}}(u)|^2 {d}u + 4 \int_{[-n,n]}\limits {\mathbb P}( |\hat{\phi}_{X_{\Delta}}(u) - \phi_{X_{\Delta}}(u)| > \kappa (\log n/n)^{1/2}  )   { d}u  
\nonumber\\ & \leq\int_{[-m,m]^{c} }\limits   |\phi_{X_{\Delta}}(u)|^2 {d}u +  32  n^{ 1 - \kappa^2/4}, 
 \end{align*}
  where the last inequality is a consequence of  Lemma in \cite{ammous2024adaptive} with $b=\kappa$. 
 Gathering all terms we obtain
 that  for all $m\in(0,n]$,
\begin{align*}
{\mathbb E}[  \| \tilde{f}_\Delta- f_{\Delta}\|^2 ]&  \leq 9 \|f_{\Delta,m}-f_{\Delta}\|^{2} +\frac m{\pi n} (5+(1+(\kappa+2)\sqrt{\log n})^{2}) +64n^{1-\frac{\kappa^{2}}{4}}.\end{align*}
Taking the infimum over $m$ completes the proof.

\subsection{Proof of Theorem \ref{thm:rate_sigma}}
 If $(b,\sigma^2,\nu)\in\mathcal T_{+,M,\alpha}$, we use the following bound on the bias term. Set $c_\alpha = 2M({2}/{\pi})^{\alpha} $, it holds:
$$\|f_{\Delta,m}-f_{\Delta}\|^{2}\leq  \frac{1}{\pi} \int_{m}^\infty e^{-\Delta \sigma^2 u^2 - c_\alpha \Delta u^\alpha}du=:b^{2}(m),\quad \forall \Delta>0,\ m\geq \frac{\pi}{2}.
$$
Recall that $V(m)=m/(\pi n)$ is the variance term.  To find $m^{\star}$ minimizing $m\mapsto b^{2}(m)+V(m)$, we  differentiate this function and solve
\begin{align}\label{eq:m_star_eq_brw}
\sigma_n^2 \Delta {m^\star}^2 + c_\alpha\Delta {m^\star}^\alpha=c_\alpha \Delta {m^\star}^\alpha \left(1+ \frac{\sigma_n^2 {m^\star}^{2-\alpha}}{c_\alpha}\right) =\log n.
\end{align}
Note that $m^\star$ then depends on $n$ and is such that $m^\star = m^\star_n \underset{n\rightarrow \infty}{\rightarrow} \infty$. Its value varies according to which term dominates between $\sigma_n^2 {m_{n}^\star}^2$ and ${m_{n}^\star}^\alpha$. 
\begin{enumerate}
\item 
If $\sigma_n^2 {m_{n}^\star}^{2-\alpha} \underset{n\rightarrow \infty}{\rightarrow} \infty$, select $m_{n}^\star =O\left(\sqrt{{\log n}/({\Delta \sigma_n^2})}\right)$. It satisfies the latter constraint when $\sigma_n (\log n)^{1/\alpha - 1/2} \underset{n\rightarrow \infty}{\rightarrow} \infty.$
\item If $\sigma_n^2 {m_{n}^\star}^{2-\alpha} \underset{n\rightarrow \infty}=O(1),$ meaning that it remains bounded away from $\infty$ and may either converge to a positive or null limit,  select $m_{n}^\star \underset{n\rightarrow \infty}=O\left({(\log n)^{1/\alpha}}{\Delta^{-1/\alpha}}\right)$. It satisfies the latter constraint when $
\sigma_n (\log n)^{ 1/\alpha-1/2 } \underset{n\rightarrow \infty}=O(1).
$
\end{enumerate}Note that the transition from one regime to the other is continuous. In both cases, we 
compare the order of the square bias term and the variance term to determine which of the two drives the rate. Using that, for $m\ge \pi/2$,
\begin{align}\label{eq:bias_control}
\pi^{-1} b^2(m)&\leq e^{-\Delta \sigma_n^2 m^2} \int_{m}^\infty e^{-c_\alpha \Delta u^\alpha}du=\frac1{ \alpha c_\alpha^{1/\alpha}\Delta^{1/\alpha}}e^{-\Delta \sigma_n^2 m^2}  \Gamma\left(1/\alpha,c_\alpha \Delta m^\alpha \right),
\end{align}
where $\Gamma(.,.)$ is the upper incomplete Gamma function, together with
 $\Gamma(s,x) \underset{x\rightarrow \infty}{\sim} x^{s-1}e^{-x}$, we obtain the following upper bound of the right hand side of \eqref{eq:bias_control}
\begin{align*}
\pi^{-1} b^2(m) &\le \frac C\Delta e^{-\Delta \sigma_n^2 m^2 - c_\alpha \Delta m^\alpha} m^{1-\alpha },
\end{align*} for a positive constant $C$ depending on $\alpha$. This inequality taken at $m^{\star }_{n}$ solution of  \eqref{eq:m_star_eq_brw} gives
\begin{align*}
 b^2(m^{\star}_{n}) &\le \frac C\Delta\frac{{m^{\star}_{n}}^{1-\alpha }}{\pi n}=\frac C\Delta{V(m^{\star}_{n}) {m^{\star}_{n}}^{-\alpha }}.
\end{align*}
Therefore, the rate  is variance dominated and the announced rates of convergence follow.

\begin{appendix}
\section{Appendix}
\subsection{ A technical Lemma} The result below directly follows from Lemma 2.3 in Picard \cite{picard1997}.
\begin{lemma}\label{lemmapicard}
Let $X$ be a Lévy process with Lévy density $p$ satisfying \eqref{Ass:p}. 
Then, denoting by $\phi_t(u)=\E[e^{iuX_t}]$, it holds:
\begin{align}\label{eq:Picard1}
|\phi_t(u)|\leq e^{-\frac{2^{\alpha}M}{\pi^{\alpha}}|u|^{\alpha}t},\quad \forall |u|\geq \frac{\pi}{2},\ \forall t>0.
\end{align}
Furthermore, $X_t$ has a smooth density $f_t$ with all its derivates uniformly bounded:
\begin{align}\label{eq:Picard2}
\sup_{x\in\R} |f_t^{(k)}(x)|\leq\frac1{\pi(k+1)}\left(\frac{\pi}{2}\right)^{k+1}+ \frac1\alpha\left(\frac{\pi}{2(tM)^{\frac1\alpha}}\right)^{k+1}\Gamma\left(\frac{k+1}{\alpha},{tM}\right),\quad \forall k\geq 0, \ \forall t>0,
\end{align}
where $\Gamma(a,x)=\int_{x}^{\infty}z^{a-1}e^{-z}dx$ denotes the incomplete Gamma function, $a,x>0$.
\end{lemma}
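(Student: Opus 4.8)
Since the statement is announced to follow from Lemma~2.3 in \cite{picard1997}, one option is simply to invoke that reference; below I sketch the self-contained route. The plan is to (i) read off the stretched-exponential decay \eqref{eq:Picard1} of $|\phi_t|$ directly from the Lévy--Khintchine exponent and Assumption \eqref{Ass:p}; then (ii) use this decay to justify Fourier inversion and termwise differentiation, so that $f_t\in C^\infty(\R)$; and (iii) bound $\sup_x|f_t^{(k)}(x)|$ by $\tfrac{1}{2\pi}\int_\R|u|^k|\phi_t(u)|\,du$, splitting the integral at $\pi/2$.

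For (i), I would begin from \eqref{LK}, which gives $|\phi_t(u)|=\exp\!\big(-\tfrac{t\sigma^2u^2}{2}-t\int_\R(1-\cos(ux))\,\nu(dx)\big)\le\exp\!\big(-t\int_\R(1-\cos(ux))\,\nu(dx)\big)$ because $\sigma^2\ge0$. For $|u|\ge\pi/2$, set $\eta:=\pi/(2|u|)\in(0,1]$, restrict the integral to $[-\eta,\eta]$, and invoke the elementary inequality $1-\cos y\ge\tfrac{4}{\pi^2}y^2$ valid on $|y|\le\pi/2$ (a one-variable convexity check: equality at $0$ and $\pm\pi/2$, strict positivity in between). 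Together with \eqref{Ass:p} this yields
\[
\int_\R(1-\cos(ux))\,\nu(dx)\ \ge\ \frac{4u^2}{\pi^2}\int_{[-\eta,\eta]}x^2p(x)\,dx\ \ge\ \frac{4M}{\pi^2}u^2\eta^{2-\alpha}\ =\ \frac{2^{\alpha}M}{\pi^{\alpha}}|u|^{\alpha},
\]
which is \eqref{eq:Picard1}. The point is that the cutoff $\eta=\pi/(2|u|)$ is exactly the self-similar one converting $\eta^{2-\alpha}$ into $|u|^\alpha$ with the advertised constant, and the constraint $\eta\le1$ is precisely $|u|\ge\pi/2$, explaining the threshold in the statement.

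For (ii)--(iii), \eqref{eq:Picard1} together with $|\phi_t|\le1$ shows that $u\mapsto u^k\phi_t(u)\in L^1(\R)$ for every $k\ge0$; hence $X_t$ has the bounded continuous density $f_t(x)=\tfrac{1}{2\pi}\int_\R e^{-iux}\phi_t(u)\,du$, and dominated convergence licenses $f_t^{(k)}(x)=\tfrac{1}{2\pi}\int_\R(-iu)^ke^{-iux}\phi_t(u)\,du$, so $f_t\in C^\infty(\R)$. Then $\sup_x|f_t^{(k)}(x)|\le\tfrac{1}{2\pi}\int_\R|u|^k|\phi_t(u)|\,du$: the piece over $|u|\le\pi/2$ equals $\tfrac{1}{\pi(k+1)}(\pi/2)^{k+1}$ using $|\phi_t|\le1$, while on the tail I would insert \eqref{eq:Picard1} and substitute $z=\tfrac{2^{\alpha}M}{\pi^{\alpha}}u^{\alpha}t$, i.e. $u=\tfrac{\pi}{2(tM)^{1/\alpha}}z^{1/\alpha}$ with $u=\pi/2\leftrightarrow z=tM$, which turns $\tfrac{1}{\pi}\int_{\pi/2}^\infty u^ke^{-\frac{2^{\alpha}M}{\pi^{\alpha}}u^{\alpha}t}\,du$ into $\tfrac{1}{\pi\alpha}\big(\tfrac{\pi}{2(tM)^{1/\alpha}}\big)^{k+1}\Gamma\!\big(\tfrac{k+1}{\alpha},tM\big)$, which is dominated by the second term of \eqref{eq:Picard2} since $1/\pi<1$. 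Adding the two pieces gives \eqref{eq:Picard2}.

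The computations in (ii)--(iii) are routine changes of variables, and the integrability of $\nu$ guarantees all manipulations make sense; the only genuinely load-bearing point is (i), where one must pair the trigonometric lower bound valid on $|y|\le\pi/2$ with the self-similar cutoff $\eta=\pi/(2|u|)$ so that \eqref{Ass:p} yields exactly the exponent $\alpha$ and the constant $2^{\alpha}M/\pi^{\alpha}$, the constraint $\eta\le1$ accounting for the threshold $|u|\ge\pi/2$.
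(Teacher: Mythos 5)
Your proposal is correct and follows essentially the same route as the paper's proof: the same truncation of the Lévy–Khintchine exponent at $\eta=\pi/(2|u|)$ combined with $1-\cos y\ge 4y^2/\pi^2$ and \eqref{Ass:p} to get \eqref{eq:Picard1}, then Fourier inversion with the integral split at $\pi/2$ and the substitution $z=\frac{2^\alpha M}{\pi^\alpha}u^\alpha t$ for \eqref{eq:Picard2} (your tail bound even retains the extra factor $1/\pi$ that the paper discards). No gaps.
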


\begin{proof}[Proof of Lemma \ref{lemmapicard}]
We follow the lines of Lemma 2.3 in Picard \cite{picard1997}. The Lévy-Khintchine formula allows to write for $u\ne 0$
\begin{align*}
|\phi_{t}(u)|=\exp\left(t\int_{\R}(\cos(u x)-1)\nu(dx)\right)\le\exp\left(t\int_{-\frac{\pi}{2|u|}}^{\frac{\pi}{2|u|}}(\cos(u x)-1)\nu(dx)\right).
\end{align*} Using that for $|x|\le {\pi}/{2}$ it holds $-{x^{2}}/{2}\le \cos(x)-1\le -{4x^{2}}{\pi^{-2}}$ and \eqref{Ass:p}, we deduce that for all $u,\ |u|\ge {\pi}/{2}$, 
$$ |\phi_{t}(u)|\le\exp\left(-\frac{4tu^{2}}{\pi^{2}}\int_{-\frac{\pi}{2|u|}}^{\frac{\pi}{2|u|}}x^{2}p(x)dx\right)\le e^{-\frac{2^{\alpha}M}{\pi^{\alpha}}|u|^{\alpha}t}.$$ In particular, this ensures that $u\mapsto u^{k}|\phi_{t}(u)|$ is integrable for any $k\in\N$, that $f_{t}$ is in $C^{k}(\R)$ and for all $x\in\R$ it holds that
\begin{align*}
|f_{t}^{(k)}(x)|&\le \frac1\pi\int_{0}^{\infty}u^{k}|\phi_{t}(u)|du\le\frac1{\pi(k+1)}\left(\frac{\pi}{2}\right)^{k+1}+ \frac1\alpha\left(\frac{\pi}{2(tM)^{\frac1\alpha}}\right)^{k+1}\Gamma\left(\frac{k+1}{\alpha},{tM}{}\right),
\end{align*} where the integral is  split  at $\pi/2$.

\end{proof}

\end{appendix}

\bibliographystyle{imsart-number} 
\bibliography{refs}    

\begin{thebibliography}{29}

\bibitem{10.1214/08-AOS640}
\begin{barticle}[author]
\bauthor{\bsnm{A{\"i}t-Sahalia},~\bfnm{Yacine}\binits{Y.}} \AND
  \bauthor{\bsnm{Jacod},~\bfnm{Jean}\binits{J.}}
(\byear{2009}).
\btitle{{Estimating the degree of activity of jumps in high frequency data}}.
\bjournal{The Annals of Statistics}
\bvolume{37}
\bpages{2202 -- 2244}.
\bdoi{10.1214/08-AOS640}
\end{barticle}
\endbibitem

\bibitem{ammous2024adaptive}
\begin{barticle}[author]
\bauthor{\bsnm{Ammous},~\bfnm{Sinda}\binits{S.}},
  \bauthor{\bsnm{Dedecker},~\bfnm{J{\'e}r{\^o}me}\binits{J.}} \AND
  \bauthor{\bsnm{Duval},~\bfnm{C{\'e}line}\binits{C.}}
(\byear{2024}).
\btitle{Adaptive directional estimator of the density in Rd for independent and
  mixing sequences}.
\bjournal{Journal of Multivariate Analysis}
\bpages{105332}.
\end{barticle}
\endbibitem

\bibitem{barndorff2012levy}
\begin{bbook}[author]
\bauthor{\bsnm{Barndorff-Nielsen},~\bfnm{O.~E.}\binits{O.~E.}},
  \bauthor{\bsnm{Mikosch},~\bfnm{T.}\binits{T.}} \AND
  \bauthor{\bsnm{Resnick},~\bfnm{S.~I.}\binits{S.~I.}}
(\byear{2012}).
\btitle{{L{\'e}vy processes: theory and applications}}.
\bpublisher{Springer Science \& Business Media}.
\end{bbook}
\endbibitem

\bibitem{belomestny2010spectral}
\begin{barticle}[author]
\bauthor{\bsnm{Belomestny},~\bfnm{Denis}\binits{D.}}
(\byear{2010}).
\btitle{Spectral estimation of the fractional order of a {L}{\'e}vy process}.
\bjournal{Ann. Statist.}
\bvolume{38}
\bpages{317-351}.
\end{barticle}
\endbibitem

\bibitem{belomestny2015levy}
\begin{barticle}[author]
\bauthor{\bsnm{Belomestny},~\bfnm{Denis}\binits{D.}},
  \bauthor{\bsnm{Comte},~\bfnm{Fabienne}\binits{F.}},
  \bauthor{\bsnm{Genon-Catalot},~\bfnm{Valentine}\binits{V.}},
  \bauthor{\bsnm{Masuda},~\bfnm{Hiroki}\binits{H.}} \AND
  \bauthor{\bsnm{Rei{\ss}},~\bfnm{Markus}\binits{M.}}
(\byear{2015}).
\btitle{L{\'e}vy Matters {IV}}.
\end{barticle}
\endbibitem

\bibitem{Bertoin}
\begin{bbook}[author]
\bauthor{\bsnm{Bertoin},~\bfnm{J.}\binits{J.}}
(\byear{1996}).
\btitle{{L{\'e}vy processes}}
\bvolume{121}.
\bpublisher{Cambridge University Press, Cambridge}.
\bmrnumber{1406564}
\end{bbook}
\endbibitem

\bibitem{bucher2013nonparametric}
\begin{barticle}[author]
\bauthor{\bsnm{B{\"u}cher},~\bfnm{Axel}\binits{A.}} \AND
  \bauthor{\bsnm{Vetter},~\bfnm{Mathias}\binits{M.}}
(\byear{2013}).
\btitle{Nonparametric inference on L{\'e}vy measures and copulas}.
\bjournal{The Annals of Statistics}
\bpages{1485--1515}.
\end{barticle}
\endbibitem

\bibitem{butucea2004deconvolution}
\begin{barticle}[author]
\bauthor{\bsnm{Butucea},~\bfnm{Cristina}\binits{C.}}
(\byear{2004}).
\btitle{Deconvolution of supersmooth densities with smooth noise}.
\bjournal{Canadian Journal of Statistics}
\bvolume{32}
\bpages{181--192}.
\end{barticle}
\endbibitem

\bibitem{MR2354572}
\begin{barticle}[author]
\bauthor{\bsnm{Butucea},~\bfnm{C.}\binits{C.}} \AND
  \bauthor{\bsnm{Tsybakov},~\bfnm{A.~B.}\binits{A.~B.}}
(\byear{2007}).
\btitle{Sharp optimality in density deconvolution with dominating bias. {I}}.
\bjournal{Teor. Veroyatn. Primen.}
\bvolume{52}
\bpages{111--128}.
\end{barticle}
\endbibitem

\bibitem{MR2742504}
\begin{barticle}[author]
\bauthor{\bsnm{Butucea},~\bfnm{C.}\binits{C.}} \AND
  \bauthor{\bsnm{Tsybakov},~\bfnm{A.~B.}\binits{A.~B.}}
(\byear{2007}).
\btitle{Sharp optimality in density deconvolution with dominating bias. {II}}.
\bjournal{Teor. Veroyatn. Primen.}
\bvolume{52}
\bpages{336--349}.
\end{barticle}
\endbibitem

\bibitem{comte2010nonparametric}
\begin{binproceedings}[author]
\bauthor{\bsnm{Comte},~\bfnm{Fabienne}\binits{F.}} \AND
  \bauthor{\bsnm{Genon-Catalot},~\bfnm{Valentine}\binits{V.}}
(\byear{2010}).
\btitle{Nonparametric adaptive estimation for pure jump {L}{\'e}vy processes}.
In \bbooktitle{Annales de l'Institut Henri Poincar{\'e} (B)}
\bvolume{46}
\bpages{595--617}.
\end{binproceedings}
\endbibitem

\bibitem{MR2816339}
\begin{barticle}[author]
\bauthor{\bsnm{Comte},~\bfnm{Fabienne}\binits{F.}} \AND
  \bauthor{\bsnm{Genon-Catalot},~\bfnm{Valentine}\binits{V.}}
(\byear{2011}).
\btitle{Estimation for {L}{\'e}vy processes from high frequency data within a
  long time interval}.
\bjournal{Ann. Statist.}
\bvolume{39}
\bpages{803--837}.
\bdoi{10.1214/10-AOS856}
\bmrnumber{2816339 (2012e:62097)}
\end{barticle}
\endbibitem

\bibitem{duval2021spectral}
\begin{barticle}[author]
\bauthor{\bsnm{Duval},~\bfnm{C{\'e}line}\binits{C.}} \AND
  \bauthor{\bsnm{Mariucci},~\bfnm{Ester}\binits{E.}}
(\byear{2021}).
\btitle{Spectral-free estimation of {L}{\'e}vy densities in high-frequency
  regime}.
\bjournal{Bernoulli}
\bvolume{27}
\bpages{2649--2674}.
\end{barticle}
\endbibitem

\bibitem{figueroa2009nonparametric}
\begin{barticle}[author]
\bauthor{\bsnm{Figueroa-L{\'o}pez},~\bfnm{Jos{\'e}~E}\binits{J.~E.}}
(\byear{2009}).
\btitle{Nonparametric estimation for L{\'e}vy models based on
  discrete-sampling}.
\bjournal{Lecture notes-monograph series}
\bpages{117--146}.
\end{barticle}
\endbibitem

\bibitem{fisz_condition_1963}
\begin{barticle}[author]
\bauthor{\bsnm{Fisz},~\bfnm{M.}\binits{M.}} \AND
  \bauthor{\bsnm{Varadarajan},~\bfnm{V.~S.}\binits{V.~S.}}
(\byear{1963}).
\btitle{A condition for absolute continuity of infinitely divisible
  distribution functions}.
\bjournal{Zeitschrift for Wahrscheinlichkeitstheorie und Verwandte Gebiete}
\bvolume{1}
\bpages{335--339}.
\end{barticle}
\endbibitem

\bibitem{gurbuzbalaban2021heavy}
\begin{binproceedings}[author]
\bauthor{\bsnm{Gurbuzbalaban},~\bfnm{Mert}\binits{M.}},
  \bauthor{\bsnm{Simsekli},~\bfnm{Umut}\binits{U.}} \AND
  \bauthor{\bsnm{Zhu},~\bfnm{Lingjiong}\binits{L.}}
(\byear{2021}).
\btitle{The heavy-tail phenomenon in SGD}.
In \bbooktitle{International Conference on Machine Learning}
\bpages{3964--3975}.
\bpublisher{PMLR}.
\end{binproceedings}
\endbibitem

\bibitem{ibragimov2013statistical}
\begin{bbook}[author]
\bauthor{\bsnm{Ibragimov},~\bfnm{Ildar~Abdulovich}\binits{I.~A.}} \AND
  \bauthor{\bsnm{Has'minskii},~\bfnm{Rafail~Zalmanovich}\binits{R.~Z.}}
(\byear{1981}).
\btitle{Statistical estimation: asymptotic theory}
\bvolume{16}.
\bpublisher{Springer Science \& Business Media}.
\end{bbook}
\endbibitem

\bibitem{kappus2015nonparametric}
\begin{barticle}[author]
\bauthor{\bsnm{Kappus},~\bfnm{Johanna}\binits{J.}}
(\byear{2015}).
\btitle{Nonparametric estimation for irregularly sampled {L}{\'e}vy processes}.
\bjournal{Statistical Inference for Stochastic Processes}
\bpages{1--27}.
\end{barticle}
\endbibitem

\bibitem{MR2683463}
\begin{barticle}[author]
\bauthor{\bsnm{Kappus},~\bfnm{Johanna}\binits{J.}} \AND
  \bauthor{\bsnm{Rei{\ss}},~\bfnm{Markus}\binits{M.}}
(\byear{2010}).
\btitle{Estimation of the characteristics of a {L}{\'e}vy process observed at
  arbitrary frequency}.
\bjournal{Stat. Neerl.}
\bvolume{64}
\bpages{314--328}.
\bdoi{10.1111/j.1467-9574.2010.00461.x}
\bmrnumber{2683463}
\end{barticle}
\endbibitem

\bibitem{knopova2013note}
\begin{binproceedings}[author]
\bauthor{\bsnm{Knopova},~\bfnm{Victoria}\binits{V.}} \AND
  \bauthor{\bsnm{Schilling},~\bfnm{Ren{\'e}}\binits{R.}}
(\byear{2013}).
\btitle{A note on the existence of transition probability densities of
  {L}{\'e}vy processes}.
In \bbooktitle{Forum Mathematicum}
\bvolume{25}
\bpages{125--149}.
\bpublisher{Walter de Gruyter GmbH}.
\end{binproceedings}
\endbibitem

\bibitem{Mies2019RateoptimalEO}
\begin{barticle}[author]
\bauthor{\bsnm{Mies},~\bfnm{Fabian}\binits{F.}}
(\byear{2019}).
\btitle{Rate-optimal estimation of the Blumenthal–Getoor index of a L{\'e}vy
  process}.
\bjournal{Electronic Journal of Statistics}.
\end{barticle}
\endbibitem

\bibitem{MR2546805}
\begin{barticle}[author]
\bauthor{\bsnm{Neumann},~\bfnm{Michael~H.}\binits{M.~H.}} \AND
  \bauthor{\bsnm{Rei{\ss}},~\bfnm{Markus}\binits{M.}}
(\byear{2009}).
\btitle{Nonparametric estimation for {L}{\'e}vy processes from low-frequency
  observations}.
\bjournal{Bernoulli}
\bvolume{15}
\bpages{223--248}.
\bdoi{10.3150/08-BEJ148}
\bmrnumber{2546805 (2010j:62258)}
\end{barticle}
\endbibitem

\bibitem{nolan2020univariate}
\begin{barticle}[author]
\bauthor{\bsnm{Nolan},~\bfnm{John~P}\binits{J.~P.}}
(\byear{2020}).
\btitle{Univariate stable distributions}.
\bjournal{Springer Series in Operations Research and Financial Engineering}
\bvolume{10}
\bpages{978--3}.
\end{barticle}
\endbibitem

\bibitem{noven2015levy}
\begin{barticle}[author]
\bauthor{\bsnm{Noven},~\bfnm{R.~C}\binits{R.~C.}},
  \bauthor{\bsnm{Veraart},~\bfnm{A.~ED}\binits{A.~E.}} \AND
  \bauthor{\bsnm{Gandy},~\bfnm{A.}\binits{A.}}
(\byear{2015}).
\btitle{{A L{\'e}vy-driven rainfall model with applications to futures
  pricing}}.
\bjournal{AStA Advances in Statistical Analysis}
\bvolume{99}
\bpages{403--432}.
\end{barticle}
\endbibitem

\bibitem{orey_continuity_1968}
\begin{barticle}[author]
\bauthor{\bsnm{Orey},~\bfnm{Steven}\binits{S.}}
(\byear{1968}).
\btitle{On {Continuity} {Properties} of {Infinitely} {Divisible} {Distribution}
  {Functions}}.
\bjournal{The Annals of Mathematical Statistics}
\bvolume{39}
\bpages{936--937}.
\end{barticle}
\endbibitem

\bibitem{picard1997}
\begin{barticle}[author]
\bauthor{\bsnm{Picard},~\bfnm{Jean}\binits{J.}}
(\byear{1997}).
\btitle{Density in small time for {L}{\'e}vy processes}.
\bjournal{ESAIM: Probability and Statistics}
\bvolume{1}
\bpages{357--389}.
\end{barticle}
\endbibitem

\bibitem{sato}
\begin{bbook}[author]
\bauthor{\bsnm{Sato},~\bfnm{K-I}\binits{K.-I.}}
(\byear{1999}).
\btitle{{L{\'e}vy processes and infinitely divisible distributions}}.
\bseries{{Cambridge Studies in Advanced Mathematics}}
\bvolume{68}.
\bpublisher{Cambridge University Press}, \baddress{Cambridge}.
\bnote{Translated from the 1990 Japanese original, Revised by the author}.
\bmrnumber{1739520 (2003b:60064)}
\end{bbook}
\endbibitem

\bibitem{tsy}
\begin{bbook}[author]
\bauthor{\bsnm{Tsybakov},~\bfnm{Alexandre~B.}\binits{A.~B.}}
(\byear{2009}).
\btitle{Introduction to nonparametric estimation}.
\bseries{Springer Series in Statistics}.
\bpublisher{Springer, New York}
\bnote{Revised and extended from the 2004 French original, Translated by
  Vladimir Zaiats}.
\bdoi{10.1007/b13794}
\bmrnumber{2724359 (2011g:62006)}
\end{bbook}
\endbibitem

\bibitem{tucker_continuous_1964}
\begin{barticle}[author]
\bauthor{\bsnm{Tucker},~\bfnm{Howard~G.}\binits{H.~G.}}
(\byear{1964}).
\btitle{On {Continuous} {Singular} {Infinitely} {Divisible} {Distribution}
  {Function}}.
\bjournal{The Annals of Mathematical Statistics}
\bvolume{35}
\bpages{330--335}.
\end{barticle}
\endbibitem

\end{thebibliography}

\end{document}